\numberwithin{equation}{section}
\newcommand{\lab}{\label}
\newcommand{\ben}{\begin{enumerate}}
\newcommand{\een}{\end{enumerate}}
\newcommand{\bea}{\begin{eqnarray}}
\newcommand{\ba}{\begin{array}}
\newcommand{\bean}{\begin{eqnarray*}}
\newcommand{\ea}{\end{array}}
\newcommand{\eea}{\end{eqnarray}}
\newcommand{\eean}{\end{eqnarray*}}
\newcommand{\beq}{\begin{equation}}
\newcommand{\eeq}{\end{equation}}
\newcommand{\bthm}{\begin{thm}}
\newcommand{\ethm}{\end{thm}}
\newcommand{\blem}{\begin{lem}}
\newcommand{\elem}{\end{lem}}
\newcommand{\bprop}{\begin{prop}}
\newcommand{\eprop}{\end{prop}}
\newcommand{\bcor}{\begin{cor}}
\newcommand{\ecor}{\end{cor}}
\newcommand{\bdfn}{\begin{dfn}}
\newcommand{\edfn}{\end{dfn}}
\newcommand{\brem}{\begin{rem}}
\newcommand{\erem}{\end{rem}}
\newcommand{\bpf}{\begin{proof}}
\newcommand{\epf}{\end{proof}}
\newcommand{\bfact}{\begin{fact}}
\newcommand{\efact}{\end{fact}}
\newcommand{\bobs}{\begin{obs}}
\newcommand{\eobs}{\end{obs}}
\newtheorem{thm}{Theorem}[section]
\newtheorem{prop}[thm]{Proposition}
\newtheorem{lem}[thm]{Lemma}
\newtheorem{cor}[thm]{Corollary}
\newtheorem{dfn}[thm]{Definition}
\newtheorem{rem}[thm]{Remark}
\newtheorem{fact}[thm]{Fact}
\newtheorem{obs}[thm]{Observation}
             \def\cF{\mathcal F}
\def\cS{\mathcal S}             \def\cG{\mathcal G}       
             \def\cI{\mathcal I}
\def\N{{\mathbb N}}            \def\Z{{\mathbb Z}}      \def\R{{\mathbb R}}
\def\C{{\mathbb C}}                  
\def\Q{{\mathbb Q}}
\def\1{1\!\!1}
\def\and{{\rm and }}          
          \def\Con{\text{Con}}
        \def\diam{\text{\rm {diam}}}
\def\h{\rm{h}}
\def\hmu{\h_\mu}           
\def\H{\text{{\rm H}}}     \def\HD{\text{{\rm HD}}}
\def\Int{\text{{\rm Int}}}
         \def\P{\text{{\rm P}}}     \def\Id{\text{{\rm Id}}}
                   \def\Pa{{\mathcal P}}
\def\a{\alpha}                \def\b{\beta}             \def\d{\delta}
                         \def\f{\phi}
\def\g{\gamma}                           \def\l{\lambda}
              \def\om{\omega}           \def\Om{\Omega}
\def\Sg{\Sigma}               \def\sg{\sigma}
               \def\th{\theta}           
\def\ka{\kappa}
\def\bi{\bigcap}              \def\bu{\bigcup}         \def\du{\bigoplus}
\def\({\bigl(}                \def\){\bigr)}
\def\lt{\left}                \def\rt{\right}
\def\ld{\ldots}               \def\bd{\partial}         \def\^{\tilde}
\def\es{\emptyset}            \def\sms{\setminus}
\def\sbt{\subset}
     \def\imp{\Rightarrow}
\def         \lra{\longrightarrow}  
\def\sp{\medskip}             \def\fr{\noindent}        
\def\ov{\overline}            
\def\fr{\noindent}         
\def\om{\omega}
\def\supp{\text{{\rm supp}}}
\begin{document}
\title[]
{ \bf\large Smale endomorphisms over graph-directed Markov systems}
\date{\today}
\author[\sc Eugen MIHAILESCU]{\sc Eugen Mihailescu}

\address{Eugen Mihailescu,
Institute of Mathematics of the Romanian Academy,
P.O Box 1-764,
RO 014700, Bucharest, 
Romania}
\email{Eugen.Mihailescu@imar.ro\newline \hspace*{0.3cm}
Web: www.imar.ro/$\sim$mihailes}
\author[\sc Mariusz URBA\'NSKI]{\sc Mariusz URBA\'NSKI}
\address{Mariusz Urba\'nski, Department of Mathematics,
 University of North Texas, Denton, TX 76203-1430, USA}
\email{urbanski\@unt.edu\newline \hspace*{0.3cm} Web:
www.math.unt.edu/$\sim$urbanski}

\date{}
\thanks{Research of the first author supported in part by grant PN-III-P4-ID-PCE-2016-0823 from UEFISCDI Romania. Research of the second author supported in part by the NSF Grant DMS 0400481.}

\begin{abstract}
We study Smale skew product endomorphisms (introduced in \cite{MU-ETDS}) now over countable graph directed Markov systems, and we prove the exact dimensionality of conditional measures in fibers, and then the global exact dimensionality of the equilibrium measure itself. Our results apply to large classes of systems and have many applications. They apply for instance to natural extensions of graph-directed Markov systems. Another application is to skew products over parabolic systems. We give also applications in ergodic number theory, for example to the continued fraction expansion, and the backward fractions expansion. In the end we obtain a general formula for the Hausdorff (and pointwise) dimension of equilibrium measures with respect to the induced maps of natural extensions $\mathcal T_\beta$ of $\beta$-maps $T_\beta$, for arbitrary  $\beta >1$. 
\end{abstract}
\maketitle
\textbf{MSC 2010:} 37D35, 37A35, 37C45, 37A45, 11K55, 46G10, 60B05.

\textbf{Keywords:} Smale endomorphisms,  equilibrium measures, conditional measures, exact dimensionality, Hausdorff dimension, natural extensions,  stable manifolds,  $\beta$-maps, graph-directed Markov systems, generalized L\"uroth systems, inverse limits,  induced maps.

\

\section{Introduction}
In this paper we extend our study from \cite{MU-ETDS} and now define and investigate  Smale skew product endomorphisms over graph--directed Markov systems with countable alphabets. In this case the limit set of the system may be non-compact and,  due to the fact that the alphabet is countable, the situation is different than in the finite case. \ 

First, in Section \ref{background} we recall several notions and results about Smale skew product endomorphisms  from \cite{MU-ETDS}, necessary in the current paper. 

Then, in Section \ref{S-EMR} we extend the notion of Smale endomorphism to graph-directed Markov systems and prove the exact dimensionality of the conditional measures in fibers for equilibrium states of summable  H\"older continuous potentials. 
Afterwards we show that this implies the global exact dimensionality of the equilibrium measure itself on the skew product. These results are contained in Theorem \ref{Attracting Smale 2} and its Corollaries (compare also with \cite{M-stable}). 

In Subsection \ref{S-parabolicgdms} we apply our results to study skew products with conformal parabolic graph-directed Markov systems in the base. This applies to many classes of examples, such as in Subsection \ref{backwrd} to backward continued fractions (see for eg \cite{AF}).

Next, an important application of our results is given in Section \ref{Natural Extensions of GDMS} to natural extensions of graph-directed Markov systems, and of iterated function systems. These natural extensions are shown to be Smale skew products in fact. We prove the exact dimensionality of conditional measures in their fibers, and then the exact dimensionality of the global equilibrium state. 

Applications of our results are given next in Section \ref{S-GLS}  for generalized L\"uroth systems and their inverse limits. 

Then in Section \ref{beta-t} we apply our results to equilibrium measures for inverse limits of $\beta$-maps for arbitrary $\beta>1$. The natural extensions of $\beta$-maps have complicated structures if $\beta>1$ is arbitrary (see \cite{DKS}). We give a general formula for the pointwise (and Hausdorff) dimension of equilibrium measures with respect to induced maps for natural extensions of $\beta$-maps, first for the case of golden mean $\beta = \frac{1+\sqrt 5}{2}$ in Theorem \ref{sqrt} (and similarly for pseudo-golden mean numbers), and then  for the more difficult case of arbitrary numbers $\beta>1$ in Theorem \ref{beta-global}.

We recall that the notion of Smale skew products and the exact dimensionality of their equilibrium measures was  used in \cite{MU-ETDS}, in order to extend the Doeblin-Lenstra Conjecture, about the approximation coefficients,
$$
\Theta_n(x) := q_n^2\left|x-\frac{p_n}{q_n}\right|
$$ 
of the continued fraction expansion of $x \in [0, 1)$ (see \cite{DK}). The original conjecture, solved in \cite{BJW}, gave the distribution of $\Theta_n(x)$ for Lebesgue-a.e $x \in [0, 1).$ We extended this in \cite{MU-ETDS} for the coefficients $\Theta_n(x)$ associated to numbers $x$ from other sets in $[0, 1)$ which are singular with respect to Lebesgue measure, but have full equilibrium measure. In addition, using our dimension formula, we found the frequency of visits of $\Theta_n(x)$ to small intervals.

\

\section{Background on Smale skew product endomorphisms}\label{background}

In this section we collect some notions and results from \cite{MU-ETDS}, which will be used in the sequel. We include them in order to make the current paper more accessible to the reader. 

\

\subsection{Notions from two-sided thermodynamic formalism}\label{2sidedshift} \ 
Let $E$ be a countable set and a matrix 
$
A:E\times E\lra\{0,1\}
$. \ 
A finite or countable infinite 
tuple $\om$ of elements of $E$ is called $A$-admissible if and only 
if $A_{ab}=1$ for any two consecutive elements $a,b$ of $E$. 
The 
matrix $A$ is called \textit{finitely irreducible} if there is a finite
set $F$ of finite $A$-admissible words so that for any two  
elements $a,b$ of $E$ there exists $\g \in F$ such that
the word $a\g b$ is $A$-admissible. In the sequel, 
the incidence matrix $A$ \textit{is assumed to be finitely irreducible}. \
Given $\b>0$, define the metric $d_\b$ on $E^\N$,
$$
d_\b\((\om_n)_0^{\infty},(\tau_n)_0^{\infty}\)
=\exp\(-\b\max\{n\ge 0:(0\le k\le n) \imp \om_k=\tau_k\}\)
$$
with the standard convention that $e^{-\infty}=0$. All metrics $d_\b$, $\b>0$, on $E^\N$ are H\"older continuously equivalent
and induce the product topology on $E^\N$. Let
$$
E_A^+=\{(\om_n)_0^{\infty}:\forall_{n\in\N}\ A_{\om_n\om_{n+1}}=1\}
$$
 $E_A^+$ is a closed subset of $E^\N$ and we endow it with
the topology and metrics $d_\b$ inherited from $E^\N$. The 
shift map 
$
\sg:E^\N\lra E^\N
$
is defined by 
$
\sg\((\om_n)_0^{\infty}\)=\((\om_{n+1})_{n=0}^{\infty}\),
$
 For every finite word $\om=\om_0\om_1\ld\om_{n-1}$, put $|\om|=n$ the length of $\om$, and 
$$
[\om]=\{\tau\in E_A^+:\forall_{(0\le j\le n-1)}:\tau_j=\om_j\}
$$ 
is the \textit{cylinder} generated by $\om$. If $\psi:E_A^+\to\R$ is continuous, define the pressure $\P(\psi)$ by $$
\P(\psi)
=\lim_{n\to\infty}\frac1n\log\sum_{|\om|=n}\exp\(\sup\(S_n\psi|_{[\om]}\)\)
$$
and the limit exists, as the sequence 
$
\lt(\log\sum_{|\om|=n}\exp\(\sup\(S_n\psi|_{[\om]}\rt)_{n \in \mathbb N}
$
is sub-additive. A function $\psi:E_A^+\lra\R$ is called 
\textit{summable} if and only if 
$$
\sum_{e\in E}\exp\(\sup\(\psi|_{[e]}\)\)< \infty
$$
A shift-invariant
Borel probability measure $\mu$ on $E_A^+$ is called a Gibbs state
of $\psi$ if there are constants $C\ge 1$ and $\P\in\R$
such that
\beq\label{1082305a}
C^{-1}
\le {\mu([\om])\over \exp(S_n\psi(\tau)-\P n)}
\le C
\eeq
for all $n\ge 1$, all admissible words $\om$ of length $n$ and all 
$\tau\in[\om]$. It follows from (\ref{1082305a}) that if $\psi$ admits a Gibbs
state, then $P=\P(\psi)$.

\bdfn\label{1121905}
A function 
$
g:E_A^+\lra\C
$
is called  H\"older continuous if it is H\"older continuous with respect to one, equivalently all, metrics $d_\b$. Then there exists $\b>0$ s.t $g$ is Lipschitz continuous with respect to $d_\b$. The corresponding Lipschitz constant is $L_\b(g)$.
\edfn

\fr A measure realizing supremum in Variational
Principle is called an \textit{equilibrium state} for $\psi$. 
See  also \cite{Sa}, \cite{Sa1} for thermodynamic formalism of equilibrium states on 1-sided countable Markov shifts $E_A^+$. 

We now recall  from \cite{MU-ETDS} results from the thermodynamic formalism of 2-sided shifts $(E_A, \sigma)$ on countable alphabets.
Again $E$ is a countable set and 
$A:E\times E\to\{0,1\}$ a
finitely irreducible matrix. For $\b>0$ the metric $d_\b$
on $E^\Z$ is
$$
d_\b\((\om_n)_{-\infty}^{\infty},(\tau_n)_{-\infty}^{\infty}\)
=\exp\(-\b\max\{n\ge 0:\forall_{k\in\Z}|k|\le n\, \imp\, \om_k=\tau_k\}\)
$$
with  $e^{-\infty}=0$. All
metrics $d_\b$, $\b>0$, on $E^\Z$ induce the product topology on $E^\Z$. We set
$$
E_A=\big\{(\om_n)_{-\infty}^{\infty}:\forall_{n\in\Z}\ A_{\om_n\om_{n+1}}=1\big\}
$$
H\"older continuity is defined similarly as before, for potentials $g: E_A \to \C$.
For every $\om\in E_A$ and all $-\infty\le m\le n\le \infty$, define 
$
\om|_m^n=\om_m\om_{m+1}\ld\om_n.
$
Let $E_A^*$ be the set of all $A$-admissible finite words. For
$\tau\in E^*$, $\tau=\tau_m\tau_{m+1}\ld\tau_n$, we let the cylinder from $m$ to $n$,
$$
[\tau]_m^n=\{\om\in E_A:\om|_m^n=\tau\}
$$
The family of cylinders from $m$ to $n$ is
denoted by $C_m^n$. 
If $m=0$, write $[\tau]$ for $[\tau]_m^n$. 

\sp\fr  Let $\psi:E_A\to\R$ be a continuous function. The topological pressure
$\P(\psi)$ is:
\beq\label{1_2015_11_04}
\P(\psi)
:=\lim_{n\to\infty}\frac1n\log\sum_{\om\in C_0^{n-1}}\exp\(\sup\(S_n\psi|_{[\om]}\)\),
\eeq
and the limit exists by the same subadditivity argument. 
 A shift-invariant
Borel probability measure $\mu$ on $E_A$ is called a \textit{Gibbs state}
of $\psi$ if there are constants $C\ge 1$, $P\in\R$
such that
\beq\label{1082305}
C^{-1}
\le {\mu([\om|_0^{n-1}])\over \exp(S_n\psi(\om)-Pn)}
\le C
\eeq
for all $n\ge 1, \om\in E_A$.  From (\ref{1082305}), if $\psi$ admits a Gibbs
state, then $P=\P(\psi)$. 

The function $\psi:E_A\to\R$ is called \textit{summable} if 
$$
\sum_{e\in E}\exp\(\sup\(\psi|_{[e]}\)\)< \infty
$$

\fr In \cite{MU-ETDS} we  proved the following results:

\blem\label{p2_2015_11_14}\cite{MU-ETDS}
A H\"older continuous $\psi:E_A\to\R$ is summable if and only if $\P(\psi)<\infty$. 
\elem

\bthm\label{t1111105p68}\cite{MU-ETDS}
For every H\"older continuous summable potential $\psi:E_A\to\R$ 
there exists a unique Gibbs state $\mu_\psi$ on $E_A$, and the measure
$\mu_\psi$ is ergodic.
\ethm

\bthm[Variational Principle for Two-Sided Shifts, \cite{MU-ETDS}]\label{t2111105p69}
Suppose that $\psi:E_A\to\R$ is a H\"older continuous summable 
potential. Then
$$
\sup\lt\{\hmu(\sg)+\int_{E_A}\!\!\!\psi d\mu:\mu\circ\sg^{-1}=\mu 
   \  \text{ and }  \  \int\!\!\psi d\mu>-\infty\rt\}
=\P(\psi)
={\hmu}_\psi(\sg)+\int_{E_A}\!\!\psi d\mu_\psi,
$$
and $\mu_\psi$ is the only measure at which this supremum is attained.
\ethm

\fr Equilibrium states for $\psi$ are defined as before for $E_A^+$. 

\

Consider now the partition:
$$
\Pa_-=\{[\om|_0^{\infty}]:\om\in E_A\}
     =\{[\om]:\om\in E_A^+\}
$$
 $\Pa_-$ is a measurable partition of $E_A$ and two
 $\a,\b\in E_A$ are in the same set of $\Pa_-$ if and only if $\a|_0^{\infty}=\b|_0^{\infty}$. If $\mu$
is a Borel probability measure on $E_A$, let 
$$
\{\ov\mu^\tau:\tau\in E_A\}
$$ 
be a \textit{canonical system of conditional measures} induced by
partition $\Pa_-$ and measure $\mu$ (see Rokhlin \cite{Ro}). Each $\ov\mu^\tau$ is a Borel probability
measure on $[\tau|_0^{\infty}]$ and we also write $\ov\mu^\om$, $\om\in E_A^+$, for the 
conditional measure on $[\om]$. The canonical projection is:
$$
\pi_0: E_A \to E_A^+, \  \pi_0(\tau) = \tau|_0^\infty, \tau \in E_A,
$$
The system $\{\ov\mu^\om:\om\in E_A^+\}$ of conditional measures is determined by the property that,
$$
\int_{E_A}g\,d\mu=\int_{E_A^+}\int_{[\om]}g\,d\ov\mu^{\om}
  \,d(\mu\circ \pi_0^{-1})(\om)
$$
for every $g\in L^1(\mu)$ (\cite{Ro}). The canonical system of conditional measures of $\mu$ is uniquely defined up to a set of $\mu\circ \pi_0^{-1}$-measure zero.

\bthm\label{t4111105p70}\cite{MU-ETDS}
Suppose that $\psi:E_A\to\R$ is a H\"older continuous summable 
potential. Let $\mu$ be a Borel probability shift-invariant measure on
$E_A$. Then $\mu=\mu_\psi$, the unique Gibbs state for $\psi$ if
and only if there exists $D\ge 1$ such that for all $n \ge 1$,
\beq\label{1111105}
D^{-1}
\le {\ov\mu^\om([\tau\om])\over 
    \exp\(S_n\psi(\rho)-\P(\psi)n\)}
\le D,
\eeq
for $\mu\circ \pi_0^{-1}$-a.e $\om\in E_A^+$, $\ov\mu^\om$-a.e $\tau\om\in E_A(-n,\infty)$ with
$A_{\tau_{-1}\om_0}=1$, and  $\rho\in\sg^{-n}([\tau\om|_{-n}^{\infty}])=
[\tau\om|_0^{\infty}]$.

\ethm



\subsection{Skew product Smale spaces of countable type}\label{con}

\begin{dfn}\label{Smaleskew}\cite{MU-ETDS}
Let $(Y,d)$ be a complete bounded metric space. For every $\om
\in E_A^+$ let $Y_\om\sbt Y$ be an arbitrary set and let
$
T_\om:Y_\om\lra Y_{\sg(\om)}
$
be a continuous injective map. Define
$$
\hat Y:=\bu_{\om\in E_A^+}\{\om\}\times Y_\om\sbt E_A^+\times Y
$$
Define the map 
$
T:\hat Y\lra\hat Y
$
by 
$
T(\om,y)=(\sg(\om),T_\om(y)).
$
The pair $(\hat Y,T:\hat Y\to\hat Y)$ is called a \textit{skew product 
Smale endomorphism} if there exists $\l>1$ such 
that  $T$ is fiberwise uniformly contracting, i.e for all $\om\in E_A^+$ and all $y_1,y_2\in Y_\om$, 
\beq\label{1111205}
d(T_\om(y_2),T_\om(y_1))\le \l^{-1}d(y_2,y_1)
\eeq
\end{dfn}

For every $\tau\in E_A(-n,\infty)$ the map 
$
T_\tau^n
=T_{\tau|_{-1}^{\infty}}\circ T_{\tau|_{-2}^{\infty}}\circ\ld
   \circ T_{\tau|_{-n}^{\infty}}:Y_\tau\lra Y_{\tau|_0^{\infty}}
$
is well-defined. For every $\tau\in E_A$ define 
$$
T_\tau^n
:=T_{\tau|_{-n}^{\infty}}^n
:=T_{\tau|_{-1}^{\infty}}\circ T_{\tau|_{-2}^{\infty}}\circ\ld
  \circ T_{\tau|_{-n}^{\infty}}:Y_{\tau|_{-n}^{\infty}}\longrightarrow Y_{\tau|_0^{\infty}}
$$
Then  $\(T_\tau^n\(Y_{\tau|_{-n}^{\infty}}\)\)_{n=0}
^\infty$ are descending, and 
$
\diam\(\ov{T_\tau^n\(Y_{\tau|_{-n}^{\infty}}\)}\)\le \l^{-n}\diam(Y).
$
As $(Y,d)$ is complete, $
\bi_{n=1}^\infty \ov{T_\tau^n\(Y_{\tau|_{-n}^{\infty}}\)}
$
is a singleton denoted by $\hat\pi_2(\tau)$. Hence we  defined the map
$$
\hat\pi_2:E_A\lra Y,
$$
and  define also $\hat\pi: E_A\to E_A^+\times Y$ by 
\beq\label{5111705p141}
\hat\pi(\tau)=\(\tau|_0^{\infty},\hat\pi_2(\tau)\),
\eeq
and the truncation to the elements of non-negative indices by 
$$\pi_0: E_A \lra E_A^+, \  \  \pi_0(\tau) = \tau|_0^\infty$$
In the notation for $\pi_0$ we drop the hat symbol, as this projection is independent of the skew product on $\hat Y$.
For all $\om\in E_A^+$ define the $\hat \pi_2$-projection of the cylinder $[\om]\subset E_A$, 
$$
J_\om:=\hat\pi_2([\om])\in Y,
$$
and call these sets the \textit{stable Smale fibers} of the system $T$. The global invariant set is:
$$
J:=\hat\pi(E_A)=\bu_{\om\in E_A^+}\{\om\}\times J_\om\sbt E_A^+\times Y,
$$
called the \textit{Smale space} (or the \textit{fibered limit set}) induced by the Smale pre-system $T$.

For each $\tau\in E_A$ we have  $\hat\pi_2(\tau)\in \ov Y_{\tau|_0^{\infty}}$; so
$
J_\om\sbt \ov Y_\om
$
for every $\om\in E_A^+$. Since all maps $T_\om:Y_\om\to Y_{\sg(\om)}$ are Lipschitz continuous with Lipschitz constant $\l^{-1}$, they extend uniquely to Lipschitz continuous maps  from $\ov Y_\om$ to $\ov Y_{\sg(\om)}$. In \cite{MU-ETDS} we have proved the following.

\bprop\label{pt1.1}\cite{MU-ETDS}
For every $\om\in E_A^+$ we have that
$
T_\om(J_\om)\sbt J_{\sg(\om)},$ 
$
\bu_{e\in E,  A_{e\om_0}=1}
T_{e\om}(J_{e\om})=J_\om, 
$
and
$
T\circ\hat\pi=\hat\pi\circ\sg
$.
\eprop
 Then $T(J)\sbt J$, so consider the system
$$
T:J\lra J, \ T(\omega, y) = (\sigma(\omega), T_\omega(y)),
$$
which we call the \textit{skew product Smale endomorphism} generated by  $T:\hat Y\lra \hat Y$.

\

If $Y$ is the closure of a bounded open set in $\R^D$, then at any point $(\omega, y) \in J$, there is a local stable manifold  $\{\omega\} \times D(y)$, where $D(y)$ is a small ball $B(y, r) \subset Y$.

\bdfn\label{d3t13}
A measure $\mu\in M_T(J)$ is called an equilibrium state of the continuous potential $\psi:\hat Y\to\R$, if $\int\psi\,d\mu>-\infty$ and \
$
\hmu(T)+\int_J\psi\,d\mu= \P_T(\psi).
$
\edfn

\bdfn\label{d3t13a}
The potential $\psi:J\to\R$ is called \textit{summable} if 
$$
\sum_{e\in E}\exp\(\sup(\psi|_{[e]_T})\)<\infty
$$
\edfn

\bobs\label{o1t13.1}
$\psi:J\to\R$ is summable if and only if $\psi\circ\hat\pi:E_A\to\R$ is summable.
\eobs

\bdfn\label{d1t13}
We call a  continuous skew product Smale endomorphism $T:\hat Y\to\hat Y$ H\"older, if  $\hat\pi:E_A\to J$ is H\"older continuous. 
\edfn

\subsection{Conformal skew product Smale endomorphisms}\label{CSPSE}
In this subsection we keep the setting of skew product Smale endomorphisms. As in \cite{MU-ETDS}, we assume more about the spaces $Y_\om$, $\om\in E_A^+$, and the fiber maps $T_\om:Y_\om\to Y_{\sg(\om)}$, namely:
\begin{itemize}
\item[(a)] $Y_\om$ is a closed bounded subset of $\R^d$, with some $d\ge 1$ such that $\ov{\Int(Y_\om)}=Y_\om$. 

\item[(b)] Each map $T_\om:Y_\om\to Y_{\sg(\om)}$ extends to a $C^1$ conformal embedding from $Y_\om^*$ to $Y_{\sg(\om)}^*$, where $Y_\om^*$ is a bounded connected open subset of $\R^d$ containing $Y_\om$. The same symbol $T_\om$ denotes this extension and we assume that the maps
$
T_\om:Y_\om^*\to Y_{\sg(\om)}^*$ satisfy:
\item[(c)] Formula \eqref{1111205}  holds for all $y_1, y_2\in Y_\om^*$, perhaps with some smaller constant $\l>1$.
\item[(d)] (Bounded Distortion Property 1) There exist constants $\a>0$ and $H>0$ such that for all $y, z\in Y_\om^*$ we have that: \ 
$$
\big|\log|T_\om'(y)|-\log|T_\om'(z)|\big|\le H||y-z||^\a
$$
\item[(e)] The function 
$
E_A\ni\tau\longmapsto\log|T_\tau'(\hat\pi_2(\om))|\in\R
$
is H\"older continuous.
\item[(f)] (Open Set Condition) For every $\om\in E_A^+$ and for all $a, b\in E$ with $A_{a\om_0}=A_{b\om_0}=1$ and $a\ne b$, we have \ 
$
T_{a\om}(\Int(Y_{a\om}))\cap T_{b\om}(\Int(Y_{b\om}))=\es.
$
\item[(g)] (Strong Open Set Condition) There exists a measurable function $\d:E_A^+\to(0,\infty)$ such that for every $\om \in E_A^+$, \ 
$
J_\om\cap\(Y_\om\sms\ov B(Y_\om^c,\d(\om)\)\ne\es.
$
\end{itemize}

\fr Any skew product Smale endomorphism satisfying conditions (a)--(g) will be called in the sequel a \textit{conformal skew product Smale endomorphism}. 

From (c), (d), and (e), we obtain two more Bounded Distortion Properties (BDP):

\fr (BDP 2)  There is $H>0$, so that for all $\tau\in E_A,  \ y,z\in Y_{\tau|_{-n}^{\infty}}^*, n >0$,
$$
\Big|\log\big|\(T_\tau^n\)'(y)\big|-
\log\big|\(T_\tau^n\)'(z)\big|\Big|
\le H||y-z||^\a
$$
\fr (BDP 3) For all $\tau\in E_A$, $n\ge 0$, and $y,z\in Y_{\tau|_{-n}^{\infty}}^*$, 
$$
K^{-1}
\le \frac{\big|\(T_\tau^n\)'(y)\big|}
{\big|\(T_\tau^n\)'(z)\big|}
\le K.
$$

\subsection{General skew products over countable--to--1 endomorphisms.}\label{generalskew} 

We now recall some notions and results on general skew products over countable-to-1 endomorphisms from \cite{MU-ETDS}, that will be used in the sequel.
 For dynamics and thermodynamic formalism for various types of endomorphisms, one can see  \cite{Ru}, \cite{ABN}, \cite{BT}, \cite{M-stable}, \cite{M-DCDS12}, \cite{M-ETDS11}, \cite{M-MZ}, \cite{MS}, \cite{MU-JFA}, etc.  In \cite{M-stable} it was proved a result about the exact dimensionality of measures on stable manifolds for hyperbolic endomorphisms. \ 
In \cite{MU-ETDS} we proved a result about skew products whose base transformations are modeled by 1-sided shifts on a countable alphabet.
Assume we have a skew product:
$$
F: X \times Y \lra X \times Y,
$$
where $X$ and $Y$ are complete bounded metric spaces, $Y \subset \R^d$ for some $d \ge 1$, and 
$$
F(x, y) = (f(x), g(x, y)),
$$ 
where the map 
$
Y\ni y \longmapsto g(x, y)
$
is injective and continuous for every $x \in X$.\ 
Denote the map 
$
Y\ni y \longmapsto g(x, y)
$
also by $g_x(y)$. Assume that 
$$
f:X \lra X
$$ 
is at most countable--to--1, with dynamics modeled by a 1--sided Markov shift on a countable alphabet $E$ with matrix $A$ finitely irreducible, i.e there is a surjective H\"older continuous
$$
p: E_A^+ \lra  X 
$$
called \textit{coding}, so that
$
p\circ \sigma = f\circ p.
$  
Assume conditions (a)--(g) from \ref{CSPSE} are satisfied for,
$$
T_\om:Y_\om\lra Y_{\sg(\om)}, \
T_\omega := g_{p(\omega)},
$$
for all $\om \in E_A^+$. Then as in \cite{MU-ETDS} we call
$
F: X \times Y \lra X \times Y
$
a \textit{generalized conformal skew product Smale endomorphism. }

Given the skew product $F$ as before, we form a  skew product endomorphism by
defining for every $\om \in E_A^+$, the fiber map $\hat F_\om: Y \to Y$,
$$
\hat F_\om(y) := g(p(\om),y)
$$
The system $(\hat Y, \hat F)$ is called \textit{the symbolic lift} of $F$.
If $\hat Y = E_A^+ \times Y$, we obtain a conformal skew product Smale endomorphism $\hat F: \hat Y \to \hat Y$ given by
\begin{equation}\label{skew-general}
\hat F(\omega, y) = (\sigma(\om), \hat F_\om(y))
\end{equation}

As in subsection~\ref{con}, we study the fibers $J_\om, \ \om \in E_A^+$ and the sets $J_x$, $x\in X$. From definition, $J_\om  = \hat\pi_2([\om])$ is the set of points of type 
$$
\bi_{n \ge 1}\ov{\hat F_{\tau_{-1}\om}\circ \hat F_{\tau_{-2}\tau_{-1}}\om\circ\ldots \circ \hat F_{\tau_{-n}\ldots\tau_{-1}\om}(Y)}
$$
By $n$-\textit{prehistory} of $x$ with respect to $(f, X)$ we understand any finite set of points in $X$
$$
(x, x_{-1}, x_{-2}, \ldots, x_{-n})\in X^{n+1},
$$
where 
$
f(x_{-1}) = x,\, f(x_{-2}) = x_{-1}, \ldots, f(x_{-n}) = x_{-n+1}.
$
Call a \textit{prehistory} of $x$ with respect to  $(f, X)$, any infinite sequence of consecutive preimages in $X$, i.e. 
$
\hat x = (x, x_{-1}, x_{-2}, \ldots),
$
where  
$
f(x_{-i}) = x_{-i+1},
$
$i \ge -1$.
\newline
The space of prehistories is denoted by $\hat X$ and is called the \textit{natural extension} (or \textit{inverse limit}) of  $(f, X)$. It projects onto $X$ by $\pi(\hat x) = x, \  \hat x \in \hat X$. There is a bijection $\hat f: \hat X \to \hat X$,
$$
\hat f(\hat x) = (f(x), x, x_{-1}, \ldots)
$$
The terms inverse limit and natural extension are both used in the sequel, without having necessarily an invariant measure on $X$. 
On $\hat X$ we take the canonical metric, which induces the topology equivalent to the one inherited from the product topology on $X^{\N}$. Then $\hat f$ is a homeomorphism. For more on dynamics of endomorphisms and inverse limits, one can see for eg, \cite{DKS}, \cite{Ru}, \cite{M-DCDS12}, \cite{FM}, \cite{M-stable}, \cite{MS}, \cite{M-MZ}, \cite{M-JSP11}, \cite{M-ETDS11}, \cite{M-Mon}. 
In above notation,
$
f(p(\tau_{-1}\om)) = p(\om)=x,
$
and for all the prehistories of $x$, $\hat x = (x, x_{-1}, x_{-2}, \ldots) \in \hat X$, consider the set $J_x$ of points of type 
$$\bigcap_{n \ge 1} \ov{g_{x_{-1}} \circ g_{x_{-2}}\circ \ldots \circ g_{x_{-n}}(Y)}$$

\fr If $\hat \eta = (\eta_0, \eta_1, \ldots)$ is another sequence in $E_A^+$ with $p(\hat\eta) = x$, then for any $\eta_{-1}$ with $\eta_{-1}\hat\eta \in E_A^+$, we have $p(\eta_{-1}\hat \eta) = x_{-1}'$ where $x_{-1}'$ is a 1-preimage (i.e preimage of order 1) of $x$. 
Thus,  
\begin{equation}\label{J_x}
J_x = \mathop{\bigcup}\limits_{\omega\in E_A^+, p(\omega) = x} J_\omega
\end{equation}

We denote the fibered limit sets for $T$ and $F$ by: 
\begin{equation}\label{J(X)}
J = \bigcup_{\omega \in E_A^+} \{\omega\}\times J_\omega \subset E_A^+ \times Y  \  \ \text{and} \  \  J(X):= \bigcup_{x \in X}\{x\}\times J_x \subset X \times Y
\end{equation}
So
$
\hat F(J)=J$
  and $ 
F(J(X))=J(X).
$
The H\"older continuous projection is:
$$
p_J: J \lra J(X), \ 
p_J(\omega, y) = (p(\omega), y),
$$
and we obtain 
$
F\circ p_J = p_J \circ \hat F. 
$
In the sequel, 
$
\hat\pi_2:E_A\lra Y
\  \  \  {\rm and} \  \  \  
\hat\pi:E_A\lra E_A^+\times Y
$
are the maps defined in subsection~\ref{con} and,
$$
\hat \pi(\tau) = (\tau|_0^\infty, \hat\pi_2(\tau))
$$

Now, we want to know if enough points $x \in X$ have unique coding sequences in $E_A^+$. 

\begin{dfn}\label{phi-inj}\cite{MU-ETDS}
Let $F: X \times Y \lra X \times Y$ be a generalized conformal skew product Smale endomorphism. Let $\mu$ be a Borel probability measure $X$. We then say that the coding 
$$
p:E_A^+ \lra X
$$ 
is $\mu$--injective, if there exists a $\mu$-measurable set $G \subset X$ with $\mu(G) = 1$ such that for every point $x \in G$, the set $p^{-1}(x)$ is a singleton in $E_A^+$. 

Denote such a set $G$ by $G_\mu$ and for $x\in G_\mu$ the only element of $p^{-1}(x)$ by $\om(x)$.
\end{dfn}

In \cite{MU-ETDS} we proved the following.

\begin{prop}\label{phi-i}
If the coding $p:E_A^+ \lra X$ is $\mu$--injective, then for every $x\in G_\mu$, we have that
$
J_x = J_{\omega(x)}.
$
\end{prop}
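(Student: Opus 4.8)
The plan is to read off the identity directly from the union formula \eqref{J_x}. Fix $x\in G_\mu$. By Definition \ref{phi-inj}, $\mu$-injectivity means that $p^{-1}(x)$ is a singleton, namely $\{\om(x)\}$. Since the union in \eqref{J_x} is indexed precisely by $\{\om\in E_A^+:p(\om)=x\}=p^{-1}(x)$, it collapses to the single set $J_{\om(x)}$, and hence $J_x=J_{\om(x)}$.

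If one prefers to argue intrinsically from the prehistory description of $J_x$ rather than quoting \eqref{J_x}, I would establish the two inclusions separately. The inclusion $J_{\om(x)}\sbt J_x$ is immediate: every $\tau\in E_A$ with $\tau|_0^\infty=\om(x)$ yields, via $\hat\pi_2$, a point $\bi_{n\ge 1}\ov{g_{x_{-1}}\circ\cdots\circ g_{x_{-n}}(Y)}$ attached to the $X$-prehistory $(x,x_{-1},x_{-2},\ldots)$ with $x_{-i}=p(\tau|_{-i}^\infty)$, and this point lies in $J_x$ by definition. For the reverse inclusion $J_x\sbt J_{\om(x)}$, take any prehistory $(x,x_{-1},x_{-2},\ldots)$ realizing a point of $J_x$; the coding is surjective at the level of prehistories (this surjectivity is exactly what underlies \eqref{J_x}), so this prehistory equals $\(p(\tau|_0^\infty),p(\tau|_{-1}^\infty),\ldots\)$ for some $\tau\in E_A$. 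In particular $\tau|_0^\infty$ is a coding of $x$, so by $\mu$-injectivity $\tau|_0^\infty=\om(x)$, whence the realized point is $\hat\pi_2(\tau)\in J_{\om(x)}$.

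The point I would stress is that uniqueness of the coding is invoked only at the base point $x$, and not at the intermediate preimages $x_{-i}$. The latter may well lie outside $G_\mu$ and carry several codings, so one cannot expect the backward symbolic extension to be forced step by step; what saves the argument is that the whole prehistory is coded by a single two-sided $\tau$, whose forward truncation $\tau|_0^\infty$ is pinned to $\om(x)$ by injectivity at $x$ alone. This is the only place the hypothesis enters, and it is precisely why the potentially branching preimage structure does not enlarge $J_x$ beyond $J_{\om(x)}$. In the streamlined write-up all of this is already contained in \eqref{J_x}, and the Proposition then follows in one line.
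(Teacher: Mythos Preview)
Your argument is correct. Note that the present paper does not actually supply a proof of this proposition; it is quoted from \cite{MU-ETDS}, so there is no in-paper proof to compare against. That said, your first paragraph is precisely the one-line deduction the statement invites once \eqref{J_x} has been established: for $x\in G_\mu$ the index set of the union in \eqref{J_x} is the singleton $\{\om(x)\}$, hence $J_x=J_{\om(x)}$. Your second, intrinsic argument is also sound, and you correctly flag the one nontrivial ingredient---that every $X$-prehistory of $x$ lifts to some $\tau\in E_A$---as being exactly the content encoded in \eqref{J_x}. Your observation that $\mu$-injectivity is used only at the base point $x$ (and not at the $x_{-i}$) is a useful remark.
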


In the sequel we work \textit{only with $\mu$-injective codings}, and the measure $\mu$ will be clear from the context. Also given a metric space $X$ with a coding $p:E_A^+ \to X$, and a potential 
$
\phi:X \lra \mathbb R,
$
we say that $\phi$ is \textit{ H\"older continuous} if and only if $\phi\circ p$ is  H\"older continuous.

\

Now consider a potential $\phi: J(X) \lra \R$ such that the potential 
$$
\widehat\phi: = \phi\circ p_J\circ \hat \pi: E_A \to \R
$$
is H\"older continuous and summable. For example, $\widehat\phi$ is  H\"older continuous if $\phi: J(X) \to \R$ is itself  H\"older continuous. This case will be quite frequent in examples below. Define now
\beq\label{1_2017_01_13}
\mu_\phi := \mu_{\widehat\phi} \circ (p_J\circ \hat \pi)^{-1},
\eeq
and call it the equilibrium measure of $\phi$ on $J(X)$ with respect to $F$. 

\sp Now, let us consider the partition $\xi'$ of $J(X)$ into the fiber sets $\{x\} \times J_x, \ x\in X$, and the conditional measures 
$\mu^x_\phi$ associated to $\mu_\phi$ with respect to the measurable partition $\xi'$ (see \cite{Ro}). Recall that for each $\omega \in E_A^+$, we have \ 
$
\hat\pi_2([\omega]) = J_\omega.
$

\bdfn\label{EDim}
We say in general that  a measure $\mu$ is exact dimensional on a space $X$, if its \textit{pointwise dimension} at $x$, defined by the formula $$d_\mu(x) = \mathop{\lim}\limits_{r\to 0} \frac{\log \mu(B(x, r))}{\log r},$$ exists for $\mu$-a.e $x\in X$, and $d_\mu(\cdot)$ is constant $\mu$-almost everywhere.
\edfn

 In see \cite{Pe}, \cite{BPS}, \cite{Y} were proved several results about exact dimensionality of invariant measures on manifolds. Also in \cite{M-stable} the conditional measures on stable manifolds of hyperbolic endomorphisms were proved to be geometric and exact dimensional.
 
 If $\mu$ is a probability $\sg$-invariant measure on $E_A$, then by $\chi_\mu(\sg)$ is its \textit{Lyapunov exponent},
$$
\chi_\mu(\sg)
:=-\int_{E_A}\log\Big|T_{\tau|_0^{\infty}}'\(\hat\pi_2(\tau)\)\Big|\,d\mu(\tau)
 =-\int_{E_A^+}\int_{[\om]}\log\big|T_\om'\(\hat\pi_2(\tau)\)\big|\,d\ov\mu^\om
   (\tau)\,dm(\om),
$$
where $m=\mu\circ \pi_0^{-1}= \pi_{1*}\mu$ is the canonical projection of $\mu$ onto $E_A^+$.

Define the \textit{Lyapunov exponent} of an $F$-invariant measure $\mu$ on  $J(X) = \bigcup\limits_{x\in X} \{x\}\times J_x$ by: 
$$
\chi_\mu(F) = - \int_{J(X)} \log|g_x'(y)| \,  d\mu(x,y)
$$

Denote by $p_1:X\times Y\to X$ the canonical projection on the first coordinate,
$
p_1(x,y)=x.
$ Then we proved that the conditional measures on fibers are exact dimensional.

\begin{thm}\label{codedskews}\cite{MU-ETDS}
Let $F: X \times Y \lra X \times Y$ a generalized conformal skew product Smale endomorphism. Let $\phi:J(X) \lra \R$ be a potential such that 
$
\hat\phi:= \phi \circ p_J\circ \hat \pi: E_A \lra \R
$
is  H\"older continuous summable. 
Assume the coding $p:E_A^+ \to X$ is $\mu_\phi\circ p_1^{-1}$--injective.

\fr Then, for $\mu_\phi\circ p_1^{-1}$-a.e $x \in X$, the conditional measure $\mu_\phi^x$ is exact dimensional on $J_x$, and
$$
\lim_{r\to 0} \frac{\log \mu_\phi^x(B(y, r))}{\log r} 
= \frac{\h_{\mu_\phi}(F)}{\chi_{\mu_\phi}(F)}
=\HD(\mu_\phi^x),
$$
for $\mu_\phi^x$-a.e $y \in J_x$; hence equivalently for $\mu_\phi$-a.e $(x, y) \in J(X)$.
\end{thm}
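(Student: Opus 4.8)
The plan is to transport the entire computation to the two-sided symbolic space $E_A$, where the Gibbs estimate of Theorem \ref{t4111105p70} is directly available, and then read off the pointwise dimension by combining that estimate with the Birkhoff ergodic theorem. Since the coding $p$ is $\mu_\phi\circ p_1^{-1}$--injective, Proposition \ref{phi-i} gives $J_x=J_{\om(x)}$ for $\mu_\phi\circ p_1^{-1}$--a.e. $x$, so each fiber is canonically identified with the $\hat\pi_2$--image of a single cylinder $[\om(x)]\subset E_A$; under this identification the conditional measure $\mu_\phi^x$ coincides, via $\hat\pi_2$, with the symbolic conditional measure $\ov\mu^{\om(x)}$ associated to the partition $\Pa_-$ and the Gibbs state $\mu_{\hat\phi}$. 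I would record at the outset the two invariances needed at the end: because $\hat\pi$ conjugates $\sg$ on $E_A$ to $\hat F$ on $J$ (Proposition \ref{pt1.1}) and $p_J$ projects onto $J(X)$ with $(p_J\circ\hat\pi)_*\mu_{\hat\phi}=\mu_\phi$, one has $\chi_{\mu_\phi}(F)=\chi_{\mu_{\hat\phi}}(\sg)$ directly from the definitions of the Lyapunov exponents, and $\h_{\mu_\phi}(F)=\h_{\mu_{\hat\phi}}(\sg)$ since $\sg$ on $E_A$ is the natural extension of the one-sided shift and $\hat F$ is the natural extension of $F$, so all four systems share the same entropy.

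The geometric heart of the argument is a comparison between metric balls in a fiber and $\hat\pi_2$--images of backward cylinders. Conformality (b)–(c) together with the bounded distortion properties (BDP 2) and (BDP 3) show that $\hat\pi_2\bigl([\tau|_{-n}^{\infty}]\bigr)=\ov{T_\tau^n\bigl(J_{\tau|_{-n}^{\infty}}\bigr)}$ has diameter comparable, up to a constant independent of $n$ and $\tau$, to $\bigl|(T_\tau^n)'(\hat\pi_2(\tau))\bigr|$. I would then fix $y=\hat\pi_2(\tau)$ and, for small $r$, let $n=n(\tau,r)$ be the largest integer with $\bigl|(T_\tau^n)'(y)\bigr|\ge r$; the generation-$n$ cylinder image sits inside $B(y,Cr)$, which yields a lower bound for $\mu_\phi^x(B(y,r))$. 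For the matching upper bound one must control how much $\ov\mu^{\om}$--measure $B(y,r)$ can collect: here the open set condition (f) and, crucially in the non-compact countable setting, the strong open set condition (g) and the thickness function $\d$ are used to bound the number of generation-$n$ cylinder images of comparable diameter that can meet a ball of radius $\asymp r$.

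With the ball–cylinder comparison in hand, I would invoke Theorem \ref{t4111105p70}: for $\mu_{\hat\phi}\circ\pi_1^{-1}$--a.e. $\om$ and $\ov\mu^\om$--a.e. $\tau\om$,
$$
\ov\mu^\om\bigl([\tau\om]\bigr)\asymp \exp\bigl(S_n\hat\phi(\rho)-\P(\hat\phi)\,n\bigr),
$$
with $\rho$ the corresponding point of $[\tau\om|_0^{\infty}]$. Taking logarithms in $\mu_\phi^x(B(y,r))\asymp\ov\mu^\om([\tau\om])$, dividing by $\log r$, and normalizing by $n$ gives
$$
\frac{\log\mu_\phi^x(B(y,r))}{\log r}
\;=\;\frac{\tfrac1n S_n\hat\phi(\rho)-\P(\hat\phi)+o(1)}{\tfrac1n\log\bigl|(T_\tau^n)'(y)\bigr|+o(1)},
$$
where I use that $\log r=\log\bigl|(T_\tau^n)'(y)\bigr|+o(n)$, the $o(n)$ absorbing the single dropped-symbol factor $\log|T_{\tau|_{-n-1}^{\infty}}'|$, which is $o(n)$ for $\mu_{\hat\phi}$--a.e. $\tau$ because $\log|T'|\in L^1$. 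The Birkhoff ergodic theorem (valid since $\mu_{\hat\phi}$ is ergodic by Theorem \ref{t1111105p68} and $\hat\phi\in L^1$ by summability) yields $\tfrac1n S_n\hat\phi\to\int\hat\phi\,d\mu_{\hat\phi}$ and $\tfrac1n\log\bigl|(T_\tau^n)'(y)\bigr|\to-\chi_{\mu_{\hat\phi}}(\sg)$, while the Variational Principle (Theorem \ref{t2111105p69}) gives $\P(\hat\phi)-\int\hat\phi\,d\mu_{\hat\phi}=\h_{\mu_{\hat\phi}}(\sg)$. Hence the ratio tends $\mu_\phi$--a.e. to the constant $\h_{\mu_{\hat\phi}}(\sg)/\chi_{\mu_{\hat\phi}}(\sg)=\h_{\mu_\phi}(F)/\chi_{\mu_\phi}(F)$; constancy of the limit is exactly exact dimensionality in the sense of Definition \ref{EDim}, and its agreement with $\HD(\mu_\phi^x)$ is the standard consequence of exact dimensionality.

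I expect the main obstacle to be the geometric ball–cylinder comparison in the \emph{countable, non-compact} regime, and specifically the upper bound on $\mu_\phi^x(B(y,r))$. Unlike the finite-alphabet case, infinitely many fibers of arbitrarily different sizes can accumulate, so showing that only boundedly many generation-$n$ cylinder images of comparable diameter meet a given ball demands careful use of the strong open set condition (g) and of the measurable function $\d$, with all comparison constants kept uniform in $\tau$ and $n$ via the bounded distortion properties. A closely related difficulty, absent in the finite case, is that the single-step derivatives admit no uniform lower bound, so consecutive generation scales may jump by an unbounded factor; this is precisely why the overshoot must be absorbed as an $o(n)$ term controlled by $\log|T'|\in L^1$, and ensuring $\chi_{\mu_{\hat\phi}}(\sg)<\infty$ so that $n(\tau,r)$ grows linearly in $|\log r|$ is the point at which that integrability must be verified.
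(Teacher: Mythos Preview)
The paper does not actually prove this theorem: it is stated in Section~\ref{background} as a background result imported from \cite{MU-ETDS}, with no proof given here. There is therefore nothing in the present paper to compare your proposal against.

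That said, your outline is the standard route for this type of result and is sound in its architecture: lift to the two--sided shift $E_A$, use Proposition~\ref{phi-i} to identify fibers $J_x$ with $J_{\om(x)}$, invoke the Gibbs characterization of conditional measures (Theorem~\ref{t4111105p70}) to estimate $\ov\mu^\om$ on backward cylinders, compare balls in the fiber with cylinder images via conformality and bounded distortion, and then apply Birkhoff together with the Variational Principle (Theorem~\ref{t2111105p69}) to extract the ratio $\h/\chi$. You have also correctly flagged the genuine technical point peculiar to the countable alphabet: the upper bound on $\mu_\phi^x(B(y,r))$, where the Strong Open Set Condition~(g) and the function $\d$ must be used to bound the mass a ball can collect, and the absence of a uniform lower derivative bound, which forces the overshoot between consecutive generations to be handled as an $L^1$ error term. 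One small gap worth tightening: the identification of $\mu_\phi^x$ with $(\hat\pi_2)_*\ov\mu^{\om(x)}$ presupposes that $\hat\pi_2|_{[\om(x)]}$ is essentially injective with respect to $\ov\mu^{\om(x)}$, which follows from the Open Set Condition~(f) together with the fact that the boundary set carries zero measure, but should be stated rather than assumed.
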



By using Theorem \ref{codedskews}, we have proved in \cite{MU-ETDS} the exact dimensionality of conditional measures of equilibrium states on fibers for many types of skew products. 
Then we also proved the following  result about \textit{global} exact dimensionality of measures on  $J(X)$. 

\begin{thm}\label{globalexact}\cite{MU-ETDS} \
Let $F: X \times Y \lra X \times Y$ a generalized conformal skew product Smale endomorphism. Assume that $X\sbt\R^d$ with some integer $d\ge 1$. Let $\mu$ be a Borel probability $F$--invariant measure on $J(X)$, and $(\mu^x)_{x\in X}$ be the Rokhlin's canonical sytem of conditional measures of $\mu$, with respect to the partition $\(\{x\}\times J_x\)_{x\in X}$. Assume that:
\begin{itemize}
\item[(a)]  There exists $\a>0$ such that for $\mu\circ p_1^{-1}$-a.e $x \in X$, the conditional measure $\mu^x$ is exact dimensional and $\HD(\mu^x)=\a$, 

\item[(b)] The measure $\mu\circ p_1^{-1}$ is exact dimensional on $X$. 
\end{itemize}

 \fr Then, the measure $\mu$ is exact dimensional on $J(X)$, and for $\mu$-a.e $(x,y) \in J(X)$,
$$
\HD(\mu)=\lim\limits_{r\to 0} \frac{\log \mu(B((x, y), r))}{\log r} = \alpha + \HD(\mu\circ p_1^{-1})
$$
\end{thm}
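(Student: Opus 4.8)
The plan is to compute the pointwise dimension of $\mu$ at a typical point $(x,y)\in J(X)$ by separating the base and fiber contributions through the Rokhlin disintegration of $\mu$ over its first marginal. Write $\nu:=\mu\circ p_1^{-1}$ and $\beta:=\HD(\nu)$, so that $\mu=\int_X\mu^x\,d\nu(x)$. Since $X$ and $Y$ are subsets of Euclidean spaces, the ball $B((x,y),r)$ is comparable, up to a fixed multiplicative factor in the radius that is irrelevant to the $\log/\log r$ limit, to the product $B(x,r)\times B(y,r)$; hence the disintegration gives
\[
\mu\big(B((x,y),r)\big)\asymp\int_{B(x,r)}\mu^{x'}\big(B(y,r)\big)\,d\nu(x').
\]
The goal is to sandwich $\log\mu(B((x,y),r))/\log r$ between $\alpha+\beta-c\e$ and $\alpha+\beta+c\e$ for every $\e>0$, which forces the limit to exist and to equal $\alpha+\beta$.

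First I would uniformize the hypotheses. Fix $\e>0$. By assumption (a) together with Egorov's theorem there is a set $\Gamma\sbt J(X)$ with $\mu(\Gamma)>1-\e$ and a radius $r_0>0$ such that
\[
\rho^{\,\alpha+\e}\le\mu^{x'}\big(B(y',\rho)\big)\le\rho^{\,\alpha-\e}
\]
for all $(x',y')\in\Gamma$ and all $0<\rho<r_0$. Likewise, assumption (b) and exact dimensionality of $\nu$ provide a set $X_0\sbt X$ with $\nu(X_0)>1-\e$ on which $r^{\beta+\e}\le\nu(B(x,r))\le r^{\beta-\e}$ for small $r$. The role of Egorov is precisely to convert the pointwise, non-uniform limits of the hypotheses into two-sided ball estimates that are uniform over a set of almost full measure.

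Next I pass to the total measure. The key observation is that if a fiber $x'\in B(x,r)$ carries \emph{some} point $(x',y')\in\Gamma$ with $y'\in B(y,r)$, then $B(y,r)\sbt B(y',2r)$ and the upper Egorov bound yields $\mu^{x'}(B(y,r))\le(2r)^{\alpha-\e}$; dually, if the $\Gamma$-point lies within $r/2$ of $y$ then $\mu^{x'}(B(y,r))\ge\mu^{x'}(B(y',r/2))\ge(r/2)^{\alpha+\e}$. Splitting the integral above into the fibers meeting $\Gamma$ inside the slice and the remaining ``bad'' fibers, one obtains
\[
\mu\big(B((x,y),r)\big)\le(2r)^{\alpha-\e}\,\nu(B(x,r))+\mu\Big(\big(B(x,r)\times B(y,r)\big)\sms\Gamma\Big),
\]
together with a matching lower bound over the good fibers. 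Combined with the $\nu$-ball estimates on $X_0$, the first term is of the desired order $r^{\alpha+\beta\mp2\e}$, so everything reduces to showing the bad remainder is negligible.

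The hard part will be controlling this remainder $\mu((B(x,r)\times B(y,r))\sms\Gamma)$: all we know a priori is that $\mu(J(X)\sms\Gamma)<\e$ \emph{globally}, which does not by itself bound the mass inside a ball of radius $r$ around a specific point. I would resolve this by a Lebesgue/Besicovitch density argument: for $\mu$-a.e. $(x,y)$ the point is a density point of $\Gamma$, so the bad remainder is of strictly smaller order than the main term, while a Borel--Cantelli argument along a sequence $\e_k\downarrow0$ upgrades the ``almost full measure'' statements to full measure. The genuine complication is that the conditional estimates live on the \emph{varying} fibers $J_{x'}$, $x'\in B(x,r)$, rather than on the single fiber $J_x$; this is exactly what forces the ``some $(x',y')\in\Gamma$ in the slice'' formulation together with the density argument, and this transverse uniformity is the technical heart of the proof. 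Letting $\e\to0$ then yields $\lim_{r\to0}\log\mu(B((x,y),r))/\log r=\alpha+\beta=\alpha+\HD(\mu\circ p_1^{-1})$ for $\mu$-a.e.\ $(x,y)$, which is exact dimensionality with the asserted value.
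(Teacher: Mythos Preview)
The paper does not actually prove this theorem: it is stated with the citation \cite{MU-ETDS} and no argument is supplied here, the result being imported verbatim from the authors' earlier work. There is therefore no in-paper proof to compare your proposal against.

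On its own merits, your outline is the standard route and is essentially correct. Disintegrating over $\nu=\mu\circ p_1^{-1}$, replacing balls by product boxes, and using Egorov to uniformize the fiber and base pointwise-dimension estimates is exactly how such ``sum of dimensions'' statements are established. Your treatment of the upper bound on $\mu(B((x,y),r))$ is clean: the density-point argument absorbs the bad remainder into the left side and yields $\mu(B)\le Cr^{\alpha+\beta-2\e}$.

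The one place where your sketch is optimistic is the lower bound on $\mu(B)$. You correctly flag that the difficulty is transverse---one must know that \emph{many} fibers $x'\in B(x,r)$ carry a $\Gamma$-point near $y$, not just one---but invoking ``$\mu(B\sms\Gamma)=o(\mu(B))$'' here is circular, since a lower bound on $\mu(B)$ is precisely what is being sought. The usual fix is to also record the \emph{upper} Egorov fiber bound $\mu^{x'}(B(y',\rho))\le\rho^{\alpha-\e}$ on $\Gamma$, use it to convert $\mu(B\cap\Gamma)$ into a \emph{lower} bound on $\nu$ of the set of good fibers, and only then feed the lower fiber bound back in; alternatively one appeals to a Besicovitch/Vitali covering argument applied to the restricted measure $\mu|_\Gamma$. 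Either way the idea you describe (density plus a Borel--Cantelli pass over $\e_k\downarrow0$) does close the argument once this bookkeeping is done. Since the present paper gives no proof, there is nothing further to compare.
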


\

\section{Skew Products with the Base Maps Being Graph--Directed Markov Systems}\label{S-EMR}

In this section we consider dynamical systems being skew products for which the base map is given by a countable alphabet conformal graph directed Markov system (GDMS). Our main goal is to prove that equilibrium measures for skew products over such base maps are exact dimensional. A \emph{directed multigraph} consists of:
\begin{itemize} 
\item A finite set $V$ of vertices, 
\item A countable (either finite or infinite) set $E$ of directed edges, 
\item A map $A:E\times E\to \{0,1\}$ called an \emph{incidence matrix} on $(V,E)$,\
\item Two functions $i,t:E\to V$, such that $A_{ab} = 1$ implies $t(a) = i(b)$. 
\end{itemize}

\fr Now suppose also that a collection of nonempty compact metric spaces $\{X_v\}_{v\in V}$ is given along with a number $\lambda\in (0,1)$, and that for every $e\in E$, we have an injective contraction 
$$
\phi_e:X_{t(e)}\lra X_{i(e)}
$$ 
with Lipschitz constant $\le \lambda$. Then the collection
\[
\cS = \big\{\phi_e:X_{t(e)}\lra X_{i(e)}\big\}_{e\in E}
\]
is called a \emph{graph directed Markov system} (or \emph{GDMS}). We now describe the limit set of  $\cS$. For every $\omega \in E^+_A$,  $\{\phi_{\omega|_n}\left(X_{t(\omega_n)}\right)\}_{n \geq 1}$ form a descending sequence of compact sets and thus
$
\bigcap_{n \geq 1}\phi_{\omega|_n}\left(X_{t(\omega_n)}\right)\ne\emptyset.
$
Since for every $n \geq 1$, 
$
\diam\left(\phi_{\omega|_n}\left(X_{t(\omega_n)}\right)\right)\le \lambda^n\diam\left(X_{t(\omega_n)}\right)\le \lambda^n\max\{\diam(X_v):v\in V\},
$
 the intersection 
$$
\bigcap_{n \in \N}\phi_{\omega|_n}\left(X_{t(\omega_n)}\right)
$$
is a singleton, and we denote its only element by $\pi(\omega)$. In this way we  define a map
\[
\pi:E^+_A\lra \coprod_{v\in V}X_v,
\]
where 
$
X:=\coprod_{v\in V} X_v
$
is the disjoint union. The map $\pi$ is called the \emph{coding map}, and
\[
J = J_\cS := \pi(E^+_A)
\]
is called the \emph{limit set} of the GDMS $\cS$. The sets
$$
J_v = \pi\(\{\omega \in E_A^+: i(\omega_1) = v\}\)
$$
for $v\in V$ are called the \emph{local limit sets} of $\cS$.

\sp We call the GDMS $\cS$ \emph{finite} if the alphabet $E$ is finite. Moreover we call $\cS$ \emph{maximal} if for all $a,b\in E$, we have $A_{ab}=1$ if and only if $t(a)=i(b)$.  In \cite{gdms} a maximal GDMS was called a \emph{graph directed system} (abbr. GDS).
Finally, we call a maximal GDMS $\cS$ an \emph{iterated function system} (or \emph{IFS}) if $V$, the set of vertices of $\cS$, is a singleton. Equivalently, a GDMS is an IFS if and only if the set of vertices of $\cS$ is a singleton and all entries of the incidence matrix $A$ are $1$. For finite IFS with overlaps and exact dimensionality of measures see \cite{FH}. For thermodynamic formalism for (countable) IFS with overlaps, and exact dimensionality of measures, see \cite{MU-Adv}, \cite{MU-ETDS}, \cite{MU-JSP16}, \cite{MU-CM2013}. Our setting below is different however, in the sense that it uses a different type of randomization.

\begin{dfn}\label{definitionsymbolirred}
We call the GDMS $\cS$ and its incidence matrix $A$ \emph{finitely (symbolically) irreducible} if there exists a finite set $\Lambda\subset E_A^*$ such that for all $a, b\in E$ there exists a word $\omega\in\Lambda$ such that the concatenation $a\omega b$ is in $E_A^*$. $\cS$ and $A$ are called \emph{finitely primitive} if the set $\Lambda$ may be chosen to consist of words all having the same length. Note that all IFSs are finitely primitive.
\end{dfn}

We call a GDMS \emph{conformal} if for some $d\in\N$, the following conditions hold:

\begin{itemize}
\item[(a)] For every vertex $v\in V$, $X_v$ is a compact connected subset of $\R^d$, and $X_v=\overline{\Int(X_v)}$.

 \item[(b)] There exists a family of open connected sets $W_v \subset X_v \;\; (v\in V)$ such that for every $e\in E$, the map $\phi_e$ extends to a $C^1$ conformal diffeomorphism from $W_{t(e)}$ into $W_{i(e)}$ with Lipschitz constant $\leq \lambda$.

\item[(c)] (Bounded Distortion Property) There are two constants $L\ge 1$ and $\alpha>0$ such that for every $e\in E$ and every pair of points $x,y\in X_{t(e)}$,
\[
\left|\frac{|\phi_e'(y)|}{|\phi_e'(x)|}-1 \right| 
\le L\|y-x\|^\alpha,
\]
where $|\phi_\omega'(x)|$ denotes the scaling of the derivative, which is a linear similarity map.

\item[(d)] (Open Set Condition) If $a, b\in E$, and $a\ne b$, then \
$
\phi_a(\Int(X_a))\cap \phi_b(\Int(X_b))=\es.
$
\item[(e)] (Boundary Condition)
There exists $e\in E$ such that
$$
J_\cS\cap \Int X_e\ne\es
$$
If the Open Set Condition and the Boundary Condition are both satisfied, then we say that the Strong Open Set Condition (SOSC) is satisfied. 
\end{itemize}

\

Examples of conformal GDMS contain: GDMS obtained by \textit{gluing} conformal iterated function systems in the same Euclidean space; \ and GDMS obtained from Markov partitions $\{X_m\}_{m\in M}$ of \textit{expanding} conformal maps $f:X \to X$ with $M$ as the set of vertices, and contractions being the inverse branches of $f$, and incidence matrix $A$ given by the respective Markov partition $\{X_m\}_{m\in M}$.

We define now the GDMS map 
$$
f=f_\cS:J_\cS\lra J_\cS,
$$
associated to the system $\cS$, by 
\beq\label{1_2016_12_16}
f(\phi_e(x)):=x
\eeq
if $x\in\Int(X_{t(e)})$ (then $e$ is uniquely determined), and $f(z)$ to be some given preassigned point $\xi$ of $J_\cS$, if $z\notin \bu_{e\in E} \phi_e\(\Int(X_{t(e)})\)$. 

\sp A special class of conformal GDMSs is provided by one dimensional--systems. Indeed, if $X$ is compact interval in $\R$, then the GDMSs, more precisely the derived maps $f_\cS:J_\cS\to J_\cS$, associated to them, are sometimes called \textit{expanding Markov-R\'enyi maps (EMR maps)}; see  \cite{PoW}. A sufficient condition for (BDP), i.e. (c) is the \textit{R\'enyi condition}, i.e,
$$
\sup\limits_{e\in E}\, \sup\limits_{x, y, z \in X}\lt\{\frac{|\phi_e''(x)|}{|\phi_e'(y)|\cdot |\phi_e'(z)|}\rt\}< \infty
$$

\sp Let us now consider a general GDMS map $f:J_\cS \to J_\cS$, and a skew product 
$$
F:J_\cS \times Y \lra J_\cS \times Y,
$$
where $Y\subset \R^d$ is a bounded open set, defined by the formula 
$$
F(x, y) = \(f(x), g(x,y)\), 
$$
Recall from ~\ref{generalskew} that the symbolic lift of $F$ is $\hat F: E_A^+\times Y \lra E_A^+\times Y$, given by 
$$
\hat F(\omega, y) = (\sigma(\omega), g(\pi(\omega),y))
$$
The map $p:E_A^+\to X$ is now equal to the map $\pi_\cS:E_A^+\to X$. So, the map
$
p\times\Id:E_A^+\times Y \to J_\cS \times Y
$
is given by the formula
$
(p\times\Id)(\om,y):=(\pi_\cS(\om),y).
$
Using the notation of subsection~\ref{generalskew}, we denote by $p_J$ its restriction to the set 
$$
J=\bu_{\om\in E_A^+}\{\om\}\times J_\om
$$ 
If the symbolic lift $\hat F$ is a H\"older conformal skew product Smale endomorphism, then we say by extension that $F$ is a \textit{H\"older conformal skew product endomorphism over $f$}.
Recall also from (\ref{J(X)}) that the fibered limit set of $F$ is 
$$
J(J_\cS) = \bu\limits_{x \in J_\cS} \{x\} \times J_x
$$ 
The first result, easy but useful for us is the following.

\blem\label{l1_2017_02_20}
Let $f:J_\cS\lra J_\cS$ be a finitely irreducible conformal GDMS map, let $Y \subset \R^d$ be an open bounded set, and let 
$$
F:J_\cS \times Y \lra J_\cS \times Y
$$ 
be a H\"older conformal skew product endomorphism over $f$. If $\nu$ is a Borel probability shift--invariant ergodic measure on $E_A^+$ with full topological support, then the coding 
$
p=\pi_\cS:E_A^+\lra J_\cS
$
is $\nu\circ\pi_\cS^{-1}$--injective. 
\elem

\bpf
Since $\phi_e(\Int X_{t(e)})\sbt \Int X_{t(e)}$, we have that 
$
\sg^{-1}\(\pi_\cS^{-1}(\Int X)\)\sbt \pi_\cS^{-1}(\Int X),
$
where
$$
\Int X:=\bu_{e\in E}\Int X_{t(e)}
$$
Since the Borel probability measure $\nu$ is shift--invariant and ergodic, it thus follows that
$
\nu\(\pi_\cS^{-1}(\Int X)\)\in\{0,1\}
$.
But since $\supp(\nu)=E_A^+$, we thus conclude from the Strong Open Set Condition (e) that $\nu\(\pi_\cS^{-1}(\Int X)\)>0$. Hence,
$
\nu\(\pi_\cS^{-1}(\Int X)\)=1
$. 
From the shift--invariance of $\nu$, we thus conclude that
$$
\nu\lt(\bi_{n=0}^\infty\sg^{-n}\(\pi_\cS^{-1}(\Int X)\)\rt)=1
$$
Denote the set in parentheses by $\Int_\infty(\cS)$. Then, by the Open Set Condition (d), the map $\pi_\cS|_{\Int_\infty(\cS)}$ is one--to--one and 
$
\pi_\cS^{-1}\(\pi_\cS(\Int_\infty(\cS))\)=\Int_\infty(\cS)
$.
Thus,
$$
\nu\circ\pi_\cS^{-1}\(\pi_\cS(\Int_\infty(\cS))\)
=\nu\(\Int_\infty(\cS)\)
=1,
$$
and for every point $x\in\pi_\cS(\Int_\infty(\cS))$, the set $\pi_\cS^{-1}(x)$ is a singleton. 

\end{proof}

We can now prove the following result:

\begin{thm}\label{EMRskews}
Let $f:J_\cS\lra J_\cS$ be a finitely irreducible conformal GDMS map, let $Y \subset \R^d$ be open bounded, and let a H\"older conformal skew--product endomorphism over $f$,
$$
F:J_\cS \times Y \lra J_\cS \times Y
$$ 
Let $\phi:J(J_\cS) \to \R$ be a potential such that 
$
\widehat\phi= \phi \circ p_J\circ \hat \pi: E_A \lra \R
$
is a locally H\"older continuous summable potential on $E_A$. 
Then, for $\mu_\phi\circ p_1^{-1}$--a.e $x \in X$, the conditional measure $\mu_\phi^x$ is exact dimensional on $J_x$, and, for $\mu_\phi^x$-a.e. $y \in J_x$,
$$
\lim_{r\to 0} \frac{\log \mu_\phi^x(B(y, r))}{\log r} 
= \frac{\h_{\mu_\phi}(F)}{\chi_{\mu_\phi}(F)}
=\HD(\mu_\phi^x)
$$

\end{thm}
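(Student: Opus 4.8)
The strategy is to reduce the statement directly to Theorem \ref{codedskews}, which already establishes exactly this conclusion for a generalized conformal skew product Smale endomorphism, provided two hypotheses are met: that $\widehat\phi = \phi\circ p_J\circ\hat\pi$ is Hölder continuous and summable, and that the coding $p:E_A^+\to X$ is $\mu_\phi\circ p_1^{-1}$--injective. The first of these is given in the statement by assumption, so the entire task is to verify the injectivity hypothesis and to confirm that the GDMS map $f:J_\cS\to J_\cS$ together with the skew product $F$ genuinely fits the framework of a generalized conformal skew product Smale endomorphism, so that Theorem \ref{codedskews} applies verbatim.

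First I would check that $F:J_\cS\times Y\to J_\cS\times Y$ satisfies the axioms of subsection~\ref{generalskew}. Here the base space is $X=J_\cS$, the base map is the GDMS map $f=f_\cS$, and the coding is $p=\pi_\cS:E_A^+\to J_\cS$, which is surjective and (by the hypothesis that $F$ is a Hölder conformal skew product over $f$) Hölder continuous, and satisfies $p\circ\sigma=f\circ p$ by the defining formula \eqref{1_2016_12_16}. The fiber maps $T_\om:=g_{\pi_\cS(\om)}$ inherit conditions (a)--(g) of subsection~\ref{CSPSE} precisely from the assumption that the symbolic lift $\hat F$ is a Hölder conformal skew product Smale endomorphism. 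Thus $F$ is a generalized conformal skew product Smale endomorphism in the sense required, and $\widehat\phi$ is locally Hölder continuous and summable by hypothesis.

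The key remaining step, and the one I expect to be the only substantive point, is the $\mu_\phi\circ p_1^{-1}$--injectivity of the coding. I would obtain this by invoking Lemma~\ref{l1_2017_02_20} with $\nu:=\mu_{\widehat\phi}\circ\pi_1^{-1}$, the projection of the two--sided Gibbs state onto $E_A^+$. This measure is shift--invariant and ergodic, since by Theorem~\ref{t1111105p68} the Gibbs state $\mu_{\widehat\phi}$ is ergodic and its one--sided projection inherits both properties under the factor map $\pi_1$. The one delicate point is that Lemma~\ref{l1_2017_02_20} requires $\nu$ to have full topological support $E_A^+$; this follows from the Gibbs inequality \eqref{1082305a}, which forces $\nu([\om])>0$ for every admissible cylinder, hence full support. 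With these three properties in hand, Lemma~\ref{l1_2017_02_20} yields that $\pi_\cS$ is $\nu\circ\pi_\cS^{-1}$--injective. It then remains to observe that $\nu\circ\pi_\cS^{-1}=\mu_\phi\circ p_1^{-1}$: unwinding the definition \eqref{1_2017_01_13}, we have $\mu_\phi=\mu_{\widehat\phi}\circ(p_J\circ\hat\pi)^{-1}$, and since $p_1\circ p_J\circ\hat\pi=\pi_\cS\circ\pi_1$ as maps $E_A\to X$, the two measures on $X$ coincide.

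Having matched both hypotheses, Theorem~\ref{codedskews} applies directly and delivers, for $\mu_\phi\circ p_1^{-1}$--a.e.\ $x\in X$, the exact dimensionality of $\mu_\phi^x$ on $J_x$ together with the asserted value $\h_{\mu_\phi}(F)/\chi_{\mu_\phi}(F)=\HD(\mu_\phi^x)$. The only genuine obstacle is the bookkeeping identifying $\nu\circ\pi_\cS^{-1}$ with $\mu_\phi\circ p_1^{-1}$ and confirming full support; everything else is a direct appeal to the already--proved machinery of Section~\ref{background}.
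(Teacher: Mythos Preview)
Your proposal is correct and follows essentially the same approach as the paper: verify that the projected Gibbs measure $\nu=\mu_{\widehat\phi}\circ\pi_1^{-1}$ has full support on $E_A^+$, apply Lemma~\ref{l1_2017_02_20} to obtain $\mu_\phi\circ p_1^{-1}$--injectivity of the coding (using the identification $p_1\circ p_J\circ\hat\pi=\pi_\cS\circ\pi_1$, which the paper also unwinds explicitly), and then invoke Theorem~\ref{codedskews}. Your write-up is in fact somewhat more careful than the paper's in spelling out the ergodicity and full-support justifications.
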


\bpf
One needs to notice that $\supp(\mu_{\widehat\phi}\circ\pi^{-1})=E_A^+$ since $\mu_{\widehat\phi}$ is the equilibrium state of the locally H\"older continuous summable potential $\widehat\phi$ on $E_A$. We showed in \cite{MU-ETDS} that 
$$\mu_\phi\circ p_1^{-1}
= \mu_{\widehat\phi}\circ (p_J\circ \hat \pi)^{-1}\circ p_1^{-1}
= \mu_{\widehat\phi}\circ (p_1\circ p_J\circ \hat \pi)^{-1}            
= \mu_{\widehat\phi}\circ (p\circ\pi_0)^{-1}
= \mu_{\widehat\phi}\circ \pi_0^{-1}\circ p^{-1}$$
This allows then to apply Lemma~\ref{l1_2017_02_20}, with the measure $\nu:=\mu_{\widehat\phi}\circ\pi^{-1}$, in order to conlude that the coding $p=\pi_\cS:E_A^+ \lra X$ is $\mu_\phi\circ p_1^{-1}$--injective. 

Hence Theorem~\ref{codedskews} applies to give the exact dimensionality of the conditional measure $\mu_\phi^x$, together with the formula for its Hausdorff dimension.

\epf

Now consider the following situation. Let $\cS$, $f$, $F$, and $Y$ be as above. Let 
$
\th: J_S \lra \R
$
be an arbitrary potential such that $\th\circ\pi_\cS:E_A^+\lra \R$ is a locally H\"older continuous summable potential. Let 
\beq\label{5_2017_03_06}
\phi_\th:= \th\circ p_1:J(J_\cS) \lra \R.
\eeq
Then we have:

\blem\label{l1_2017_02_17}
The potential
$
\widehat\phi_\th=\phi_\th\circ p_J\circ \hat \pi: E_A \lra \R
$
is locally H\"older continuous and summable.
\elem

\bpf
Since $
\widehat\phi_\th=(\th\circ\pi_\cS)\circ\pi_0,$ it follows that $\widehat\phi_\th$
is locally H\"older continuous as a composition of two locally H\"older continuous functions. From the definition of summability, the function $\widehat\phi_\th:E_A \to \R$ is summable, since $\th\circ\pi_\cS:E_A^+\to \R$ is summable.
\epf

In this setting, as a result related to Theorem~\ref{EMRskews} (compare also with the case of conditional measures on stable manifolds of endomorphisms in \cite{M-stable}), we get 

\bthm\label{Attracting Smale 2}
Let $\cS$ be a finitely irreducible conformal GDMS. Let  $f:J_\cS\lra J_\cS$ be the corresponding GDMS map. Let $Y \subset \R^d$ be an open bounded set, and let 
$$
F:J_\cS\times Y \lra J_\cS\times Y
$$ 
be a H\"older conformal skew product endomorphism over $f$. 
Let also 
$
\th: J_S \lra \R
$
be a potential such that $\th\circ\pi_\cS:E_A^+\lra \R$ is a locally H\"older continuous summable. Then, 
\sp\begin{itemize}
\item[(a)] For $\mu_{\th\circ\pi_\cS}\circ\pi_\cS^{-1}$--a.e. $x \in J_{\cS}$, the conditional measure $\mu_{\phi_\th}^x$ is exact dimensional on $J_x$, and
$$
\lim_{r\to 0} \frac{\log \mu_{\phi_\th}^x(B(y, r))}{\log r} = \frac{{\h}_{\mu_{\phi_\th}}(F)}{\chi_{\mu_{\phi_\th}}(F)}
$$
for $\mu_{\phi_\th}^x$--a.e. $y \in J_x$; hence, equivalently for $\mu_{\phi_\th}$--a.e $(x,y) \in J(J_\cS)$. 
\item[(b)] The equilibrium state $\mu_{\phi_\th}$ of $\phi_\th:J(J_\cS) \lra \R$ for $F$, is exact dimensional on $J(J_\cS)$ and 
$$
\HD\(\mu_{\phi_\th}\)
=\frac{{\h}_{\mu_{\phi_\th}}(F)}{\chi_{\mu_{\phi_\th}}(F)} +\HD\(\mu_{\th\circ\pi_\cS}\circ\pi_\cS^{-1}\)
=\frac{{\h}_{\mu_{\phi_\th}}(F)}{\chi_{\mu_{\phi_\th}}(F)}+
\frac{{\h}_{\mu_{\th\circ\pi_\cS}}}{{\chi}_{\mu_{\th\circ\pi_\cS}}}
$$
\end{itemize}
\ethm

\begin{proof}
It follows from \eqref{1_2017_01_13} that
$$
\mu_{\phi_\th}\circ p_1^{-1}
=\mu_{\phi_\th\circ p_J\circ\hat\pi}\circ (p_J\circ\hat\pi)^{-1}\circ p_1^{-1}
=\mu_{\th\circ p_1\circ p_J\circ\hat\pi}\circ(p_1\circ p_J\circ\hat\pi)^{-1}
$$
Since $\pi_\cS\circ\pi_0=p_1\circ p_J\circ\hat\pi$, then,
$$
\mu_{\th\circ\pi_\cS}\circ\pi_\cS^{-1}
=\mu_{\th\circ\pi_\cS\circ\pi_0}\circ\pi_0^{-1}\circ\pi_\cS^{-1}
=\mu_{\th\circ p_1\circ p_J\circ\hat\pi}\circ(p_1\circ p_J\circ\hat\pi)^{-1}
$$
Therefore,
$$
\mu_{\phi_\th}\circ p_1^{-1}=\mu_{\th\circ\pi_\cS}\circ\pi_\cS^{-1}.
$$
Hence, (a) follows now directly from Theorem~\ref{EMRskews}, while (b) follows from (a) and Theorem~\ref{globalexact} since exact dimensionality of the measure $\mu_{\th\circ\pi_\cS}\circ\pi_\cS^{-1}$ has been proved in 
\cite{MU-Adv}. Indeed in \cite{MU-Adv} we proved, as a particular case of the random case, the exact dimensionality for all projections of ergodic invariant measures on limit sets of countable conformal IFSs with arbitrary overlaps; and this result extends easily to GDMS. 

\end{proof} 

 An immediate consequence of Theorem \ref{EMRskews} is this:
\begin{cor}\label{EMRskews_B}
Let $f:J_\cS\to J_\cS$ be a finitely irreducible conformal GDMS map, let $Y \subset \R^d$ be an open bounded set, and let 
$$
F:J_\cS \times Y \lra J_\cS \times Y
$$ 
be a H\"older conformal skew product endomorphism over $f$. Let $\phi:J(J_\cS) \lra \R$ be a locally H\"older continuous potential such that 
$
\widehat\phi:= \phi \circ p_J\circ \hat \pi: E_A \lra \R
$
is summable. Then, for $\mu_\phi\circ p_1^{-1}$--a.e $x \in X$, the conditional measure $\mu_\phi^x$ is exact dimensional on $J_x$, and
$$
\lim_{r\to 0} \frac{\log \mu_\phi^x(B(y, r))}{\log r} 
= \frac{\h_{\mu_\phi}(F)}{\chi_{\mu_\phi}(F)}
=\HD(\mu_\phi^x),
$$
for $\mu_\phi^x$--a.e $y \in J_x$; hence, equivalently, for $\mu_\phi$--a.e $(x, y) \in J(J_\cS)$.
\end{cor}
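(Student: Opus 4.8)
The plan is to observe that the only difference between the hypotheses of this corollary and those of Theorem~\ref{EMRskews} lies in where the local H\"older assumption is placed: Theorem~\ref{EMRskews} requires the pullback $\widehat\phi=\phi\circ p_J\circ\hat\pi$ to be locally H\"older continuous and summable, whereas here we assume instead that $\phi$ itself is locally H\"older continuous (together with summability of $\widehat\phi$). Since summability is already granted, it suffices to upgrade the local H\"older continuity of $\phi$ to that of $\widehat\phi$, after which Theorem~\ref{EMRskews} applies verbatim and yields the stated conclusion.

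To carry this out I would factor $\widehat\phi$ as $\phi\circ(p_J\circ\hat\pi)$ and track the regularity through the two projections. By hypothesis $F$ is a H\"older conformal skew product endomorphism over $f$, which by Definition~\ref{d1t13} means precisely that $\hat\pi:E_A\to J$ is H\"older continuous; moreover $p_J:J\to J(J_\cS)$ is H\"older continuous (this being recorded in the text, and resting on the H\"older continuity of the GDMS coding $\pi_\cS:E_A^+\to J_\cS$, which follows from the uniform contraction $\lambda<1$). Hence the composite $p_J\circ\hat\pi:E_A\to J(J_\cS)$ is H\"older continuous, and composing the locally H\"older continuous $\phi$ with this H\"older map produces a locally H\"older continuous $\widehat\phi$ on $E_A$. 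This is exactly the mechanism already used in the proof of Lemma~\ref{l1_2017_02_17}, where local H\"older continuity was shown to be stable under composition with H\"older projections.

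Finally, with $\widehat\phi$ now known to be both locally H\"older continuous and summable, I would invoke Theorem~\ref{EMRskews} directly to obtain, for $\mu_\phi\circ p_1^{-1}$-a.e. $x$, the exact dimensionality of $\mu_\phi^x$ on $J_x$ together with the identification of the pointwise dimension with $\h_{\mu_\phi}(F)/\chi_{\mu_\phi}(F)$ and with $\HD(\mu_\phi^x)$. The one point meriting care --- and the only genuine, if mild, obstacle --- is verifying that local H\"older continuity really is preserved by this composition on the two-sided countable-alphabet shift $E_A$: unlike in the finite-alphabet case one cannot rely on uniform H\"older constants across all of $E_A$, so the oscillation estimate must be organized cylinder-by-cylinder, that is, after fixing the zeroth symbol, exactly as the definition of local H\"older continuity permits. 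Once this bookkeeping is in place the corollary follows immediately.
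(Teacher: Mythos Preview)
Your proposal is correct and matches the paper's intent: the paper states this corollary as ``an immediate consequence of Theorem~\ref{EMRskews}'' with no separate proof, relying on the observation (already recorded in the text preceding \eqref{1_2017_01_13}) that $\widehat\phi$ is H\"older continuous whenever $\phi$ is. Your argument simply spells out why this holds, via the H\"older continuity of $p_J\circ\hat\pi$ guaranteed by Definition~\ref{d1t13}, and then invokes Theorem~\ref{EMRskews}.
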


 A Corollary of Theorem~\ref{Attracting Smale 2}, which will be applied to EMR maps (in the sense of \cite{PoW}), is then:

\bcor\label{Attracting Smale 2_B}
Let $\cS$ be a finitely irreducible conformal GDMS. Let $f:J_\cS\to J_\cS$ be the corresponding GDMS map. Let $Y \subset \R^d$ be an open bounded set, and let 
$$
F:J_\cS\times Y \lra J_\cS\times Y
$$ 
be a H\"older conformal skew product endomorphism over $f$. 
Let $\th: J_S \to \R$ be a locally H\"older continuous potential so that 
$
\th\circ\pi_\cS:E_A^+\to \R
$
is a summable potential. Then, 
\begin{itemize}
\item[(a)] For $\mu_{\th\circ\pi_\cS}\circ\pi_\cS^{-1}$--a.e. $x \in J_{\cS}$, the conditional measure $\mu_\phi^x$ is exact dimensional on $J_x$; in fact
$$
\lim_{r\to 0} \frac{\log \mu_{\phi_\th}^x(B(y, r))}{\log r} = \frac{{\h}_{\mu_{\phi_\th}}(F)}{\chi_{\mu_{\phi_\th}}(F)}
$$
for $\mu_{\phi_\th}^x$--a.e. $y \in J_x$; hence, equivalently for $\mu_{\phi_\th}$--a.e $(x,y) \in J(J_\cS)$.

\item[(b)] The equilibrium state $\mu_{\phi_\th}$ of $\phi_\th:J(J_\cS) \lra \R$ for $F$, is exact dimensional on $J(J_\cS)$ and 
$$
\HD\(\mu_{\phi_\th}\)
=\frac{{\h}_{\mu_{\phi_\th}}(F)}{\chi_{\mu_{\phi_\th}}(F)} +\HD\(\mu_{\th\circ\pi_\cS}\circ\pi_\cS^{-1}\)
=\frac{{\h}_{\mu_{\phi_\th}}(F)}{\chi_{\mu_{\phi_\th}}(F)}+
\frac{{\h}_{\mu_{\th\circ\pi_\cS}}}{{\chi}_{\mu_{\th\circ\pi_\cS}}}
$$
\end{itemize}
\ecor

\brem\label{r1_2017_02_23}
As mentioned above, it folows from the last Corollary that our results hold in a particular setting when the derived map $f_\cS:I\lra I$, associated to the GDMS $\cS$, is an expanding Markov-R\'enyi (EMR) map in the sense of \cite{PoW}.
\erem 

\sp Now, consider further an arbitrary conformal GDMS
\[
\cS = \big\{\phi_e:X_{t(e)}\lra X_{i(e)}\big\}_{e\in E}
\]
 Let $\th:J_\cS \lra \R$ be a potential such that $\th\circ\pi_\cS:E_A^+\lra\R$ is locally H\"older continuous and summable. Of particular importance are then the potentials $\th_{q,t}: J_\cS \lra \R$, $t, q\in\R$, 
\beq\label{5_2017_02_23}
\th_{q,t}(\phi_e(x))
:=t\log|\phi_e'(x)|+q(\th(\phi_e(x))-\P(\th))
\eeq
playing a significant role for developing a multifractal analysis of equilibrium states (see also \cite{PoW}). We have then,
$$
\th_{q,t}\circ\pi_\cS(\om)
=t\log\big|\phi_{\om_0}'(\pi_\cS(\sg(\om)))\big|
+q\(\th\circ\pi_\cS(\om)-\P(\phi)\)
$$
In terms of the GDMS map $f_\cS:J_\cS\lra J_\cS$ associated to $\cS$, and defined by \eqref{1_2016_12_16}, we have
\beq\label{5_2017_02_23_B}
\th_{q,t}(x)
=-t\log|f'(x)|+q(\th(x)-\P(\th)).
\eeq
From the Bounded Distortion Property (BDP) from the definition of conformal GDMSs, the first summand in the above formula is H\"older continuous and the second one is H\"older continuous by its definition. Thus, we obtain:

\blem\label{l2_2017_02_23}
For all $q, t\in\R$ the potential $\th_{q,t}\circ\pi_\cS:E_A^+\lra$ is locally H\"older continuous.
\elem

 The problem of parameters $q, t\in\R$ for which $\th_{q,t}\circ\pi_\cS:E_A^+\to\R$ are summable is more delicate and has been treated in detail in \cite{HMU}. 
The importance of the geometric potentials $\th_{q,t}\circ\pi_\cS$ is due to the fact that these are suitable for a description of geometry of the limit set $J_\cS$. If $q=0$, then the parameter $t\ge 0 $ for which $\P(\th_{0,t})=0$ (if it exists) coincides with the Hausdorff dimension $\HD(J_\cS)$ of the limit set $J_\cS$, and  also these potentials play a role in multifractal analysis of the equilibrium state $\mu_{\th\circ\pi_\cS}\circ\pi_\cS^{-1}$; for example as in \cite{HMU}, \cite{gdms}, \cite{PoW}. 

Denote by $\Sg(\cS,\th)$ the set of pairs $(t,q)\in\R^2$ for which the potential $\th_{q,t}\circ\pi_\cS:E_A^+\to\R$ is summable and by $\Sg_0(\cS,\th)$ the set of those $q\in\R$ for which there exists a (unique) real number $T(q)$, usually refered to as a temperature, so that $(q,T(q))\in \Sg(\cS,\th)$ and 
$$
\P\(\th_{q,T(q)}\circ\pi_\cS\)=0
$$
We now are in the setting of Lemma~\ref{l1_2017_02_17} and Theorem~\ref{Attracting Smale 2}. For $q\in \Sg_0(\cS,\th)$ abbreviate
$$
\psi_q:=\phi_{\th_{q,T(q)}}=\th_{q,T(q)}\circ p_1: J(J_\cS)\lra\R
$$
From Theorem~\ref{Attracting Smale 2}, we get the following.

\bcor\label{c3_2017_0223}
With $\th_{q,t}$ defined in \eqref{5_2017_02_23} and with notation following it, we have the following. If $q\in \Sg_0(\cS,\th)$, then,

\fr(a) For $\mu_{\th_{q,T(q)}\circ\pi_\cS}\circ\pi_\cS^{-1}$--a.e. $x \in J_{\cS}$, the conditional measure $\mu_{\psi_q}^x$ is exact dimensional on $J_x$; in fact
$$
\lim_{r\to 0} \frac{\log \mu_{\psi_q}^x(B(y, r))}{\log r} = \frac{{\h}_{\mu_{\psi_q}}(F)}{\chi_{\mu_{\psi_q}}(F)}
$$
for $\mu_{\psi_q}^x$--a.e. $y \in J_x$; hence, equivalently for $\mu_{\psi_q}$--a.e $(x,y) \in J(J_\cS)$.

\fr(b) The equilibrium state $\mu_{\psi_q}$ of $\psi_q:J(J_\cS)\to \R$ for $F$, is exact dimensional on $J(J_\cS)$, and
$$
\HD\(\mu_{\psi_q}\)
=\frac{{\h}_{\mu_{\psi_q}}(F)}{\chi_{\mu_{\psi_q}}(F)} +\HD\(\mu_{\th_{q,T(q)}\circ\pi_\cS}\circ\pi_\cS^{-1}\)
=\frac{{\h}_{\mu_{\phi_\th}}(F)}{\chi_{\mu_{\phi_\th}}(F)}+
\frac{{\h}_{\mu_{\th_{q,T(q)}\circ\pi_\cS}}}{{\chi}_{\mu_{\th_{q,T(q)}\circ\pi_\cS}}}
$$ 
\ecor

\

\subsection{Skew products with conformal parabolic GDMSs in the base} \label{S-parabolicgdms}
Now we pass to the second large class of examples, built on parabolic iterated function systems. 

Assume again that we are given 

\begin{itemize}
\item a directed multigraph $(V,E,i,t)$ ($E$ countable, $V$ finite), 

\item an incidence matrix $A:E\times E\lra \{0,1\}$, 

\item two functions $i, t:E\lra V$ such that $A_{ab} = 1$ implies $t(a) = i(b)$. 

\item  nonempty compact metric spaces $\{X_v\}_{v\in V}$. 
\end{itemize}

Suppose further that we have  a collection of
conformal maps 
$$
\phi_e:X_{t(e)}\lra X_{i(e)},  \  \  e\in E, 
$$
satisfying the following conditions (which are more general than the above in that we do not necessarily assume that the maps are uniform contractions):

\begin{itemize}
\item[(1)](Open Set Condition) \ 
$
\phi_a(\Int(X))\cap \phi_b(\Int(X))=\es
$, 
for all $a, b\in E$ with $a\ne b$.

\item[(2)] $|\phi_e'(x)|<1$ everywhere except for finitely many
pairs $(e,x_e)$, $e\in E$, for which $x_e$ is the unique fixed point
of $\phi_e$ and $|\phi_e'(x_e)|
=1$. Such pairs and indices $e$ will be called \textit{parabolic} and the set of
parabolic indices will be denoted by $\Om$. All other indices will be 
called \textit{hyperbolic}. We assume $A_{ee}=1$ for all $e\in\Om$.

\item[(3)]  $\forall n\ge 1 \  \forall \om = (\om_1\om_2...\om_n)\in E_A^n$
if $\om_n$ is a hyperbolic
index or $\om_{n-1}\ne \om_n$, then $\phi_{\om}$ extends conformally to
an open connected $W_{t(\om_n)}\sbt\R^d$ and maps $W_{t(\om_n)}$ into $W_{i(\om_n)}$.

\item[(4)] If $e\in E$ is a parabolic index, then 
$$
\bi_{n\ge 0}\phi_{e^n}(X)
=\{x_e\}
$$ 
and the diameters of the sets $\phi_{e^n}(X)$ converge
to 0.

\item[(5)] (Bounded Distortion Property) $\exists K\ge 1 \  \forall
n\ge 1
 \  \forall \om\in E_A^n  \  \forall x,y\in W_{t(\om_n)}$, if $\om_n$ 
is a hyperbolic index or $\om_{n-1}\ne \om_n$, then
$$
{|\phi_\om'(y)| \over |\phi_\om'(x)| } \le K
$$
\item[(6)] $\exists \ka<1 \  \forall n\ge 1  \  \forall \om\in E_A^n$ if 
$\om_n$ is a hyperbolic index or $\om_{n-1}\ne \om_n$, then
$\|\phi_\om'\|\le \ka$.

\item[(7)] (Cone Condition)  There exist $\a,l>0$ such that for
 every $x\in\bd X \sbt\R^d$ there exists an open cone 
$\Con(x,\a,l)\sbt \Int(X)$ with vertex $x$, central
angle of Lebesgue measure $\a$, and altitude $l$.

\item[(8)] There exists a constant $L\ge 1$ such that for every $e\in E$ and every $x,y\in V$,
\[
\bigg|\frac{|\phi_e'(y)|}{|\phi_e'(x)|}-1 \bigg| \le L\|y-x\|^\alpha,
\]
\end{itemize}

\fr We call such a system 
$
\cS=\{\phi_e:e\in E\}
$ 
a {\it  subparabolic conformal graph directed Markov system. }

\bdfn\label{Def_Parabolic}
If $\Om\ne\es$, we call the system $\cS=\{\phi_i:i\in E\}$ {\it parabolic}. 
\edfn

As declared in (2) the elements of the set $E\sms \Om$ are called
hyperbolic.
We extend this name to all the words appearing in (5) and (6). It follows
from (3) that for every hyperbolic word $\om$, \
$
\phi_\om(W_{t(\om)})\sbt W_{t(\om)}.
$
Note that our conditions ensure that $\f_e'(x) \neq 0$ 
for all $e\in E$ and all $x \in X_{t(i)}$. It was proved (though only for IFSs but the case of GDMSs can be treated completely similarly) in \cite{MU_Parabolic_1} (comp. \cite{gdms}) that
\beq\label{1_2016_03_15}
\lim_{n\to\infty}\sup_{\om\in E_A^n}\big\{\diam(\phi_\om(X_{t(\om)}))\big\}=0.
\eeq
This implies then:

\bcor\lab{p1c2.3} 
The map $\pi:E_A^+\lra X:=\du_{v\in V}X_v$, \ 
$
\{\pi(\om)\}:=\bi_{n\ge 0}\phi_{\om|_n}(X),
$
is well--defined,  and  $\pi$ is uniformly continuous.
\ecor

\fr Similarly as for hyperbolic (attracting) systems the limit set $J = J_\cS$ of $\cS = \{\f_e\}_{e\in e}$ is,
$$
J_\cS:=\pi(E_A^+)
$$
and satisfies the following self-reproducing property:
$
J = \bu_{e\in E} \f_e(J).
$

We now want to associate to the parabolic system $\cS$ a canonical hyperbolic system $\cS^*$; this will be done by using the jump transform (\cite{Sch}). We will then be able to apply the ideas from the previous section 
to $\cS^*$. The set of edges is:
$$
E_*:= \big\{i^nj: n\ge 1, \  i\in\Om, \ i\ne j\in E, \ A_{ij}=1\big\} \cup 
(E\sms \Om)\sbt E_A^*
$$ 
We set
$$
V_*:=V
$$
and keep the functions $t$ and $i$ on $E_*$ as the restrictions of $t$ and $i$ from $E_A^*$. The incidence matrix $A^*:E_*\times E_*\to\{0,1\}$ is defined in the natural (the only reasonable) way by declaring that $A^*_{ab}=1$ if and only if $ab\in E_A^*$. Finally 
$$
\cS^*:=\big\{\phi_e:X_{t(e)}\lra X_{t(e)}|\, e\in E^*\big\}
$$
It follows from our assumptions (see \cite{MU_Parabolic_1} and \cite{gdms}) that the following is true.

\bthm\lab{p1t5.2} 
The system $\cS^*$ is a hyperbolic (contracting) conformal GDMS and the limit sets $J_\cS$ and $J_{\cS^*}$ differ only by a countable set. If the system $\cS$ is finitely irreducible, then so is the system $\cS^*$.
\ethm

\fr The most important advantage of $\cS^*$ is that it is a an attracting conformal GDMS. On the other hand, the price we pay by replacing  the non--uniform contractions in $\cS$ with the uniform contractions in $\cS^*$ is that even if the alphabet $E$ is finite, the alphabet $E_*$ of $\cS^*$ is always infinite. Thus we will be able to apply our results on infinite Smale systems.  We have the following quantitative behavior around parabolic points.

\bprop\lab{p1c5.13} 
Let $\cS$ be a conformal parabolic GDMS. Then there exists a constant $C\in(0, \infty)$ and for every $i\in\Om$ there exists some constant
$\beta_i\in(0, \infty) $ such that for all $n\ge 1$ and for all $z\in X_i:=
\bu_{j\in I\sms\{i\}}\phi_j(X)$,
$$
C^{-1}n^{-{\beta_i+1\over \beta_i}}\le |\phi_{i^n}'(z)|
\le Cn^{-{\beta_i+1\over \beta_i}}
$$
If  $d=2$ then all constants $\b_i$ are integers $\ge 1$, and if $d\ge 3$ then all  $\b_i$ are equal to $1$. 
\eprop

\fr From Theorem~\ref{Attracting Smale 2} we obtain:

\begin{cor}\label{parabolic Smale} 
Let $\cS$ be an irreducible conformal parabolic GDMS. Let $\cS^*$ be the corresponding atracting conformal GDMS produced in Theorem~\ref{p1t5.2}. Furthermore, let  $f:J_{\cS^*}\to J_{\cS^*}$ be the corresponding GDMS map. Let $Y \subset \R^d$ be an open bounded set, and let 
$$
F:J_{\cS^*}\times Y\lra J_{\cS^*} \times Y
$$ 
be a H\"older conformal skew product endomorphism over $f$.
Let 
$
\th: J_{\cS^*}\to \R
$
be a potential such that $\th\circ\pi_{\cS^*} :E_{A^*}^+\to \R$ is a locally H\"older continuous summable potential. Then, 

\fr(a) For $\mu_{\th\circ\pi_\cS}\circ\pi_{\cS^*}^{-1}$--a.e $x \in J_{\cS^*} $, the conditional measure $\mu_{\phi_\th}^x$ is exact dimensional on $J_x$ and for $\mu_{\phi_\th}^x$--a.e. $y \in J_x$ (hence, equivalently for $\mu_{\phi_\th}$--a.e $(x,y) \in J(J_{\cS^*} )$) we have,
$$
\lim_{r\to 0} \frac{\log \mu_{\phi_\th}^x(B(y, r))}{\log r} = \frac{{\h}_{\mu_{\phi_\th}}(F)}{\chi_{\mu_{\phi_\th}}(F)}
$$
\fr (b) The equilibrium state $\mu_{\phi_\th}$ of $\phi_\th:J(J_\cS^*) \lra \R$ for $F$, is exact dimensional on $J(J_\cS^*)$ and 
$$
\HD\(\mu_{\phi_\th}\)
=\frac{{\h}_{\mu_{\phi_\th}}(F)}{\chi_{\mu_{\phi_\th}}(F)} +\HD\(\mu_{\th\circ\pi_{\cS^*}}\circ\pi_{\cS^*}^{-1}\)
=\frac{{\h}_{\mu_{\phi_\th}}(F)}{\chi_{\mu_{\phi_\th}}(F)}+
\frac{{\h}_{\mu_{\th\circ\pi_{\cS^*}}}}{{\chi}_{\mu_{\th\circ\pi_{\cS^*}}}}
$$
\end{cor}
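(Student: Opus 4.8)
The plan is to deduce the corollary directly from Theorem~\ref{Attracting Smale 2}, applied not to the parabolic system $\cS$ itself but to the associated hyperbolic system $\cS^*$ produced by the jump transform. First I would invoke Theorem~\ref{p1t5.2}: since $\cS$ is an irreducible conformal parabolic GDMS, the system $\cS^*$ is a finitely irreducible attracting (uniformly contracting) conformal GDMS. Consequently $\cS^*$ satisfies all the structural hypotheses --- conformality, bounded distortion, the open set and boundary conditions, and finite irreducibility --- that underlie Theorem~\ref{Attracting Smale 2}.

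Next I would verify the remaining hypotheses of Theorem~\ref{Attracting Smale 2} with $\cS^*$ in the role of $\cS$. The GDMS map $f:J_{\cS^*}\to J_{\cS^*}$ is the one given in the statement; $F:J_{\cS^*}\times Y\to J_{\cS^*}\times Y$ is by assumption a H\"older conformal skew product endomorphism over $f$; and $\th\circ\pi_{\cS^*}:E_{A^*}^+\to\R$ is locally H\"older continuous and summable by hypothesis. These are precisely the inputs required by Theorem~\ref{Attracting Smale 2}. Applying that theorem then yields part (a) --- the exact dimensionality of each conditional measure $\mu_{\phi_\th}^x$ on the fiber $J_x$, with pointwise dimension equal to ${\h}_{\mu_{\phi_\th}}(F)/\chi_{\mu_{\phi_\th}}(F)$ --- and part (b) --- the global exact dimensionality of $\mu_{\phi_\th}$ on $J(J_{\cS^*})$ together with the additive dimension formula, in which the base contribution $\HD\(\mu_{\th\circ\pi_{\cS^*}}\circ\pi_{\cS^*}^{-1}\)$ is identified with ${\h}_{\mu_{\th\circ\pi_{\cS^*}}}/{\chi}_{\mu_{\th\circ\pi_{\cS^*}}}$.

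The point worth emphasizing --- and the reason this corollary is not a triviality despite its short derivation --- is that the alphabet $E_*$ of $\cS^*$ is \emph{always} infinite, even when the original parabolic alphabet $E$ is finite, since the jump transform replaces each parabolic block $i^n$ by a separate symbol. Thus no finite-alphabet theory would apply here, and it is exactly the countable-alphabet version of Theorem~\ref{Attracting Smale 2} that is needed. Once that machinery is in place, however, there is no further obstacle: the parabolicity of $\cS$ enters only through the construction of $\cS^*$, after which the argument is purely an instance of the attracting (hyperbolic) GDMS result.
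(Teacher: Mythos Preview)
Your proposal is correct and matches the paper's approach exactly: the paper simply states that the corollary follows from Theorem~\ref{Attracting Smale 2}, and you have correctly identified that the application is to $\cS^*$ (which is finitely irreducible and attracting by Theorem~\ref{p1t5.2}) rather than to $\cS$ itself. Your additional remark about the necessity of the countable-alphabet theory, since $E_*$ is always infinite, is a nice amplification of a point the paper also makes just before the corollary.
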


\fr From Corollary~\ref{c3_2017_0223}, by replacing $\cS$ with $\cS^*$, we get: 

\bcor\label{c3_2017_0223_parabolic}
With $\th_{q,t}$ defined in \eqref{5_2017_02_23} and with notation following it, we have the following. If $q\in \Sg_0(\cS^*,\th)$, then 
we obtain for the potential $\psi_q = \theta_{q, T(q)} \circ p_1: J(J_{\cS^*})\lra \mathbb R$ the same conclusions (a), (b) as in Corollary \ref{parabolic Smale}.

\ecor

 We investigate now an example, denoted by $\mathcal I$, formed by the inverse maps of the two continuous pieces of 
the Manneville--Pomeau map $f_\a:[0, 1]\lra [0, 1]$ defined by: 
$$
f_\a(x) = x + x^{1+\alpha} \ (\text{mod}\,1),
$$
where $\alpha>0$ is arbitrary fixed. Of course the GDMS map resulting from $\cI$ is $f_\a$. 

\sp\fr Then as a  consequence of Corollary~\ref{parabolic Smale}, we obtain:

\bcor\label{Attracting Smale 2 MP}
$\alpha>0$. Let $Y \subset \R^d$ be an open bounded set, and let 
$$
F_\a:J_\cI\times Y \lra J_\cI\times Y
$$ 
be a H\"older conformal skew product endomorphism over the Manneville--Pomeau map $f_\a$. 

Let $\th: J_\cI\lra \R$ be a potential such that $\th\circ\pi_\cI:\N^+\lra \R$ is a locally H\"older continuous summable potential. Then, 
the conclusions of Corollary \ref{parabolic Smale} will hold in this case.


\ecor

Also for the geometric potentials $\theta_{q, T(q)}$ we obtain:

\bcor\label{c3_2017_0223_MP}
With $\th_{q,t}$ defined in \eqref{5_2017_02_23}, then for $q\in \Sg_0(\cI,\th)$, we obtain the same conclusions (a), (b) as in Corollary \ref{Attracting Smale 2 MP} for the potential $\theta_{q, T(q)}$.



\ecor

\brem\label{r1_2016_12_15}
In constructing the attracting conformal GDMS of Theorem~\ref{p1t5.2} we built on Schweiger's jump transformation, \cite{Sch}. We could also use inducing on each  $X_v$, $v\in V\sms \Om$, considering the system generated by  $\phi_\om$ where $i(\om_1)=t\(\om|_{|\om|}\)=v$ and $i(\om_k)\ne v$ for all $k=2, 3,\ld,|\om|-1$. The ``jump'' construction of Theorem~\ref{p1t5.2} seems to be somewhat better as it usually leads to a smaller system.
\erem

\subsection{Backward continued fractions}\label{backwrd}
\ 
 Each irrational $x \in [0, 1]$ has a unique expansion in the form of backward continued fraction:
$$
x = \frac{1}{\om_1-\frac{1}{\om_2-\frac{1}{\om_3-\frac{1}{\ldots}}}},
$$
where $\om_1, \om_2, \ldots$ are integers $\ge 2$. The corresponding map to the Gauss map from the regular continued fractions, is the R\'enyi map $V
: [0, 1) \lra [0, 1)
$
given by the formula
$$
V(x):= 
\begin{cases}
\lt \{\frac{1}{1-x}\rt\}
 &{\rm if} \ x \ne 0, \\ 
  0  &{\rm if} \ x = 0
\end{cases}
$$
The graph of R\'enyi map is the reflection of the graph of the Gauss map in the line $x = \frac 12$. 

The backward continued fraction system $\mathcal S$ is given by the maps:
\begin{equation}\label{backwardf}
\phi_i: [0, 1] \to [0, 1], \  \  \phi_i(x) = \frac{1}{i-x}, \  i \ge 2.
\end{equation}
Hence the map $\phi_2$ has a neutral fixed point at 1, and is contracting everywhere else.   All the other maps $\phi_j$ are contracting on $[0, 1]$, for $j \ge 3$. \ 
Unlike for the regular continued fractions where the invariant absolutely continuous measure for the Gauss map is the measure $\frac{dx}{1+x}$,  \ for backward continued fractions, the invariant absolutely continuous measure for $V$ is $\frac{dx}{x}$ (see \cite{AF}).  
The backward continued fraction map $V$ is a factor of a cross--section map for the geodesic flow on the unit tangent bundle of the modular surface. 

The system $\mathcal S$ given by (\ref{backwardf}) satisfies our conditions for a parabolic system, and we can associate to it the contracting system $\mathcal S^*$ (using the jump transformation).
We can then apply Corollary \ref{parabolic Smale} with regard to a 
 H\"older conformal skew product endomorphism $F: J_{\cS^*}\times Y\lra J_{\cS^*} \times Y$ over $f$, to obtain the exact dimensionality of the conditional measures of the equilibrium measure $\mu_\phi$ of any H\"older continuous summable potential $\phi$. We infer also the global exact dimensionality of $\mu_\phi$ on $  J_{\cS^*}\times Y$.

\

\section{Natural Extensions of Graph Directed Markov Systems}\label{Natural Extensions of GDMS}

Finally in this section, we want to define and explore the systems we refer to which as natural extensions of graph directed Markov systems. More precisely, let 
$
\cS = \big\{\phi_e:X_{t(e)}\to X_{i(e)}\big\}_{e\in E}
$
be a conformal finitely irreducible graph directed Markov system and let 
$$
f=f_\cS:J_\cS\lra J_\cS
$$
be the GDMS map associated to the system $\cS$ and given by formula \eqref{1_2016_12_16}. Fix  an arbitrarily chosen point $\xi\in J_\cS$. We then define the skew product map 
$$
\^f:J_\cS\times \ov J_\cS\lra J_\cS\times \ov J_\cS,
$$ 
\beq\label{2_2017_03_03}
\^f(\phi_e(x),y):=(x,\phi_e(y))=\big(f(\phi_e(x)),\phi_e(y)\big),
\eeq
if $x\in\Int(X_{t(e)})$ (then $e$ is uniquely determined), and 
\beq\label{A2_2017_03_03}
\^f(z,y):=(\xi,\xi),
\eeq
if $z\notin \bu_{e\in E} \phi_e\(\Int(X_{t(e)})\)$. We call it the \textit{natural extension} (or \textit{inverse limit}) of  $f$, see also \ref{generalskew}. For applications to endomorphisms see for eg \cite{Ru2}, \cite{M-approx}, \cite{M-ETDS11}, \cite{M-MZ}, \cite{M-DCDS12}. In \cite{M-MZ} it was constructed and studied in detail a large class of skew product endomorphisms with overlaps in fibers, which dynamically are far from both homeomorphisms and constant-to-1 endomorphisms, in regard to their fiber dimensions.
From Theorem~\ref{EMRskews} we obtain:

\bcor\label{c1_2016_12_16c1_2016_12_16}
Let $\cS$ be a conformal finitely irreducible graph directed Markov system and let $f=f_\cS:J_\cS\to J_\cS$ be the corresponding GDMS map. Let also
$$
\^f:J_\cS\times \ov J_\cS\lra J_\cS\times \ov J_\cS
$$ 
be the natural extension of the map $f$, as defined above. Let $\phi:J_\cS\times \ov J_\cS\to\R$ be a potential such that 
$
\widehat\phi:= \phi \circ p_J\circ \hat \pi: E_A \lra \R
$
is locally H\"older continuous and summable. 
Then, for $\mu_\phi\circ p_1^{-1}$--a.e $x \in J_\cS$, the conditional measure $\mu_\phi^x$ is exact dimensional on $\ov J_\cS$, and
$$
\lim_{r\to 0} \frac{\log \mu_\phi^x(B(y, r))}{\log r} = \frac{{\h}_{\mu_\phi}(\^f)}{\chi_{\mu_\phi}(\^f)},
$$
for $\mu_\phi^x$--a.e $y \in \ov J_\cS$; hence, equivalently, for $\mu_\phi$--a.e $(x,y) \in J_\cS\times \ov J_\cS $.

\ecor



 As a consequence of Theorem~\ref{Attracting Smale 2} we obtain the following.

\bcor\label{c1_2017_03_03}
Let $\cS$ be a conformal finitely irreducible graph directed Markov system and let $f=f_\cS:J_\cS\to J_\cS$ be the corresponding GDMS map. Furhermore, let 
$$
\^f:J_\cS\times \ov J_\cS\lra J_\cS\times \ov J_\cS
$$
 be the natural extension of the map $f$. Let $\th: J_S \to \R$ be an arbitrary potential such that 
$
\th\circ\pi_\cS:E_A^+\lra J_\cS
$ 
is a locally H\"older continuous summable potential. Then, 
\begin{itemize}
\item[(a)] For $\mu_{\th\circ\pi_\cS}\circ\pi_\cS^{-1}$--a.e. $x \in J_{\cS}$, the conditional measure $\mu_{\phi_\th}^x$ is exact dimensional on $J_x$; in fact for $\mu_{\phi_\th}^x$--a.e. $y \in J_x$,
$$
\lim_{r\to 0} \frac{\log \mu_{\phi_\th}^x(B(y, r))}{\log r} = \frac{{\h}_{\mu_{\phi_\th}}(\^f)}{\chi_{\mu_{\phi_\th}}(\^f)}
$$
\item[(b)] The equilibrium state $\mu_{\phi_\th}$ of $\phi_\th:J_\cS\times \ov J_\cS \lra \R$ for $\^f$, is exact dimensional and,
$$
\HD\(\mu_{\phi_\th}\)
=\frac{{\h}_{\mu_{\phi_\th}}(\^f)}{\chi_{\mu_{\phi_\th}}(\^f)} +\HD\(\mu_{\th\circ\pi_\cS}\circ\pi_\cS^{-1}\)=\frac{{\h}_{\mu_{\phi_\th}}(\^f)}{\chi_{\mu_{\phi_\th}}(\^f)}+\frac{{\h}_{\mu_{\th\circ\pi_\cS}}}{{\chi}_{\mu_{\th\circ\pi_\cS}}}
$$
\end{itemize}
\ecor

 Thus, in the important case of the Gauss map and the continued fraction system, we have the exact dimensionality of the conditional measures on the fibers of the natural extension, and also the global exact dimensionality of equilibrium measures:

\bcor\label{c1_2017_03_03B}
Let $\Q^c$ be the set of all irrational numbers in the unit interval $[0,1]$. Let 
$
G:\Q^c\lra\Q^c
$
the Gauss map
$
G(x):=\lt\{\frac 1x\rt\}, 
$
and let the iterated function system on $[0,1]$ 
$$
\cS_\cG=\lt\{[0,1]\ni x\longmapsto \frac{1}{x+n}\in[0,1]\rt\}_{n\in\N},
$$
consisting of all inverse branches of the Gauss map $G$, so that
$
G=f_{\cS_\cG}.
$
Let 
$$
\^G:J_\cG\times \ov J_\cG\lra J_\cG\times \ov J_\cG
$$ 
be the natural extension of  $G$. Let $\th: J_G \to \R$ be a potential such that 
$
\th\circ\pi_\cG:E_A^+\to \R
$ 
is locally H\"older continuous summable. Then the conclusion of Corollary \ref{c1_2017_03_03} holds.

\ecor

\sp

\textbf{Remark.} We recall that in \cite{MU-ETDS} we used these results to study the approximation coefficients $\Theta_n(x)$ from the continued fraction expansion of $x$, extending and developing the \textit{Doeblin-Lenstra Conjecture}  (see \cite{BJW}, \cite{DK}, \cite{IK}). If $x \in [0, 1)$ is an irrational number with continued fraction expansion 
$$
x = \frac{1}{a_1 +\frac{1}{a_2+\ldots}}, \ a_i \ge 1, i \ge 1,
$$
and if 
$$
\frac{p_n}{q_n} = \frac{1}{a_1+\frac{1}{a_2+ \frac{1}{\ldots +\frac{1}{a_n}}}}
$$ 
is the truncated expansion at order $n$, with integers $p_n, q_n \ge 1$, for $n \ge 1$, then 
$$
\Theta_n(x) := q_n^2\lt|x - \frac{p_n}{q_n}\rt|
$$
The original Doeblin-Lenstra Conjecture gave the distribution of $\Theta_n(x)$ associated to numbers $x$ from a set $A\subset [0, 1)$ of Lebesgue measure 1. We studied the distribution of $\Theta_n(x)$ for numbers $x$ outside $A$, namely from sets in $[0, 1)$ having (singular) $\mu$-equilibrium measure equal to 1. Moreover, we improved the Doeblin-Lenstra conjecture in this case, by describing the frequency of visits of $\Theta_n(x)$ to arbitrarily small intervals.



\

\section{Generalized L\"uroth Systems and Their Inverse Limits}\label{S-GLS}

We include this short section since it is needed for the full treatment of $\b$-transformations, for arbitrary $\b>1$, and of their natural extensions. It just collects the results of the previous sections in the case of a special subclass of maps.

\bdfn
We call an iterated function system 
$
\cS=\{\phi_e:I\to I\}_{e\in E}
$
a L\"uroth system if the maps $\phi_e:I\lra I$, $e\in E$,  are of the form 
$
x\longmapsto ax+b, \  \  \a\ne 0,
$ 
and \ 
$
{\rm Leb}\lt(\bu_{e\in E}\phi_e(I)\rt)=1.
$
\edfn

\fr Let $f=f_\cS:J_\cS\to J_\cS$ be the map associated to the system $\cS$ and given by formula \eqref{1_2016_12_16}:
\beq\label{3_2017_03_16}
f(\phi_e(x))=x
\eeq
Writing 
$$
\phi_e(x)=a_ex+b_e, \  \  e\in E,
$$
with $a_e\in(0,1)$, we rewrite \eqref{3_2017_03_16} in the following more explicit form
\beq\label{A1_2017_03_16}
f(x)=
\begin{cases}
a_e^{-1}x-a_e^{-1}b_e, &{\rm if } \  \  x\in \Int(\phi_e(I)) \\
0, &{\rm if } \  \  x\notin \bu_{e\in E}\Int(\phi_e(I)).
\end{cases}
\eeq
In particular, we took the given preassigned point $\xi$ involved in  \eqref{1_2016_12_16} to be $0$. In the sequel we will need however mainly only the definition of $f$ on $\bu_{e\in E}\Int(\phi_e(I))$. 

\

The \textit{natural extension} $\^f:J_\cS\times \ov J_\cS\to J_\cS\times \ov J_\cS$ of $f$ is given by formula \eqref{2_2017_03_03}. In more explicit terms, we have:
\begin{equation}\label{natextGLS}
\^f(x,y) = 
\begin{cases}
(f(x),a_ey+b_e)=(a_e^{-1}x-a_e^{-1}b_e,a_ey+b_e\),  &x\in \Int(\phi_e(I)) \\
(0,0),  &x\notin \bu_{e\in E}\Int(\phi_e(I))
\end{cases}
\end{equation}
As consequences  of Corollary~\ref{c1_2016_12_16c1_2016_12_16},
we have:

\bcor\label{c1_2016_12_16X}
Let $\cS$ be a L\"uroth system system and let $f=f_\cS:J_\cS\to J_\cS$ be the corresponding GDMS map. Furhermore, let
$$
\^f:J_\cS\times \ov J_\cS\lra J_\cS\times \ov J_\cS
$$ 
be the natural extension of the map $f$. Let $\phi:J_\cS\times \ov J_\cS\to\R$ be a potential such that 
$
\widehat\phi= \phi \circ p_J\circ \hat \pi: E_A \lra \R
$
is locally H\"older continuous and summable. 
Then, for $\mu_\phi\circ p_1^{-1}$--a.e $x \in J_\cS$, the conditional measure $\mu_\phi^x$ is exact dimensional on $\ov J_\cS$, and moreover,
$$
\lim_{r\to 0} \frac{\log \mu_\phi^x(B(y, r))}{\log r} = \frac{{\h}_{\mu_\phi}(\^f)}{\chi_{\mu_\phi}(\^f)},
$$
for $\mu_\phi^x$--a.e $y \in \ov J_\cS$; hence, equivalently, for $\mu_\phi$-a.e $(x,y) \in J_\cS\times \ov J_\cS $.

In particular, the above conclusions hold if $\phi:J_\cS\times \ov J_\cS\to\R$ is a locally H\"older continuous potential with
$
\widehat\phi= \phi \circ p_J\circ \hat \pi: E_A \to \R
$ summable.  
And the above conclusions hold if $\th: J_S \to \R$ with $\th\circ\pi_\cS:E_A^+\to J_\cS$ locally H\"older continuous summable potential. 
\ecor

\

\section{Thermodynamic Formalism for Inverse Limits of $\beta$--Maps,  $\beta > 1$}\label{beta-t}

For arbitrary $\beta>1$, the $\beta$--transformation is $T_\beta: [0, 1) \to [0, 1)$, 
$$
T_\beta(x) = \beta x  \ \text{(mod 1)},
$$ 
and the $\beta$--expansion of $x$ is:
$$
x = \mathop{\sum}\limits_{k=1}^\infty \frac{d_k}{\beta^k},
$$
where 
$
d_k = d_k(x) = [\beta T_\beta^{k-1}(x)], \ k \ge 1,
$
where as usually $[a]$ denotes the integer part of a real number $a$. The \textit{digits} $d_k, \ k \ge 1$ are chosen from  the finite set $\{0, 1, \ldots, [\beta]\}$.
\newline
Not all infinite series of the form 
$
\mathop{\sum}\limits_{k\ge 1} \frac{d_k}{\beta^k},
$
with $d_k \in \{0, 1, \ldots, [\beta]\}$ are however $\beta$--expansions of some number. We say that a sequence of digits $(d_1, d_2, \ldots)$ is \textit{admissible} if there is  $x \in [0, 1)$ with $\beta$-expansion  
$$
x = \mathop{\sum}\limits_{k \ge 1} \frac{d_k}{\beta^k}.
$$
The map  $T_\beta$ does not necessarily preserve Lebesgue measure $\lambda$, however it has a unique probability measure $\nu_\beta =  h_\beta d\lambda$, which is equivalent to $\lambda$ and $T_\beta$--invariant. Its density $h_\beta$ has an explicit form (see \cite{Pa}, \cite{DK}). 

The dynamical and metric properties of the transformation $T_\beta$ have been studied by many authors, for eg \cite{DKa}, \cite{DK}, \cite{DKS}, \cite{Pa}. Also several authors studied the $T_\beta$-invariant set $U_\beta$ of real numbers $x$ which have a unique $\beta$-representation (also called \textit{univoque numbers} for $\beta$-expansion), for eg \cite{AK}, \cite{GS}. In this case there is a bijection between $U_\beta$ and the set $\mathcal U_\beta$ of uniquely defined prehistories of points from $U_\beta$. 

\

 Consider now the inverse limit of the system $([0, 1), T_\beta)$. First, let us take  $\beta = \frac{\sqrt 5 +1}{2}$, as a simpler example. Define the following skew product map
$$
\mathcal T_\beta(x, y) 
:= \lt(T_\beta(x), \frac{y+[\beta x]}{\beta}\rt),
$$
on a subset $Z \subset [0, 1)^2$, where the horizontal $[0, 1)$  is considered as the future-axis and the vertical $[0, 1)$ is considered as the past-axis. The inverse limit of $T_\beta$ must encapsulate both the forward iterates of $T_\beta$, and the backward trajectories of $T_\beta$. For $\beta = \frac{1+\sqrt 5}{2}$, take 
$$
Z = [0,1/\beta) \times [0, 1) \ \bigcup \ [1/\beta, 1) \times [0,1/\beta)
$$
Then the map $\mathcal T_{\beta}: Z \to Z$ is well defined and bijective, and it is the inverse limit of $T_\beta$.
 For $\beta = \frac{\sqrt 5 +1}{2}$, the set of admissible sequences forms a subshift of finite type $E_2^+(11)$, defined as the set of sequences in $E_2^+$ which do not contain the forbidden word $11$.
In this case there is a H\"older continuous coding map $\pi:E_2^+(11) \lra [0, 1)$, given by
$$
\pi\big((d_1, d_2, \ldots) \big) = \sum\limits_{n \ge 1} \frac{d_n}{\beta^n}
$$
We can then take skew product endomorphisms over $T_{\frac{1+\sqrt 5}{2}}$, and obtain the following.

\begin{thm}\label{sqrt}
Let $F:[0, 1) \times Y \lra [0, 1)\times Y$ be a skew product endomorphism given by
$$
F(x, y) = \lt(\frac{1+\sqrt 5}{2} x \ ({\rm mod}\, 1), g(x, y)\rt),
$$
where $Y$ is an open bounded set in $\R^d$ and  $g(x, \cdot): Y \lra Y$ is a conformal map for every $x \in [0, 1)$. Assume also that for any $x_{-1}, x_{-1}' \in [0, 1)$ with
$
T_{\frac{1+\sqrt 5}{2}}x_{-1} = T_{\frac{1+\sqrt 5}{2}}x'_{-1},
$
we have 
$$
g(x_{-1}, Y) \cap g_{x_{-1}'}(Y) = \emptyset,
$$
and that the fiber maps $g_x$ satisfy conditions (a)--(g) of subsection \ref{CSPSE}. Then, for any locally H\"older continuous potential 
$
\phi: [0, 1) \times Y \lra \R,
$
its equilibrium measure $\mu_\phi$ has exact dimensional conditional measures $\mu_\phi^x$, for $\mu_\phi\circ\pi_0^{-1}$--a.e $x \in [0, 1)$.
\end{thm}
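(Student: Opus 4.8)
The plan is to recognize the golden--ratio $\beta$--map as a finitely irreducible conformal GDMS map and then apply Theorem~\ref{EMRskews} essentially verbatim. First I would realize $T_\beta$, for $\beta=\frac{1+\sqrt5}{2}$, as the GDMS map $f_\cS$ of the system $\cS$ with finite alphabet $E=\{0,1\}$, incidence matrix $A$ given by $A_{00}=A_{01}=A_{10}=1$ and $A_{11}=0$ (so that $E_A^+=E_2^+(11)$ is exactly the golden--mean subshift described above), and inverse branches the affine contractions $\phi_0(x)=x/\beta$ and $\phi_1(x)=(x+1)/\beta$. The Markov constraint defining $E_2^+(11)$ is precisely what restricts the domains of these branches so that every admissible composition $\phi_{\om|_n}$ maps into $[0,1)$; this realizes $T_\beta$ as a finitely irreducible (indeed finitely primitive) conformal GDMS map whose coding map $\pi_\cS$ coincides with the Hölder series coding $\pi$ displayed above, and whose limit set $J_\cS$ agrees with $[0,1)$ up to a countable set. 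Such a realization is standard (cf.\ \cite{gdms}).

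With this identification the conformal--GDMS axioms of Section~\ref{S-EMR} hold trivially: each $\phi_i$ is a similarity with $|\phi_i'|\equiv\beta^{-1}<1$, hence a conformal contraction; the Bounded Distortion Property is immediate because the derivatives are constant; the Open Set Condition holds because $\phi_0$ and $\phi_1$ have disjoint open images in $[0,1)$; and the Boundary (Strong Open Set) Condition is clear since $J_\cS=[0,1)$ has nonempty interior. Thus $f=T_\beta=f_\cS$ is a finitely irreducible conformal GDMS map in the precise sense demanded by Theorem~\ref{EMRskews}.

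Next I would verify that $F$ is a Hölder conformal skew product endomorphism over $f$. Conditions (a)--(g) of subsection~\ref{CSPSE} for the fiber maps $g_x$ are part of the hypotheses, and the disjointness requirement $g(x_{-1},Y)\cap g(x_{-1}',Y)=\es$ for preimages $T_\beta x_{-1}=T_\beta x_{-1}'$ is exactly the Open Set Condition (f) of that subsection rewritten for the present map; the uniform fiber contraction together with the bounded distortion properties then forces $\hat\pi$ to be Hölder continuous, as shown in \cite{MU-ETDS}, so the symbolic lift $\hat F$ is a Hölder conformal skew product Smale endomorphism. For the potential I would check the two hypotheses of Theorem~\ref{EMRskews} on $\widehat\phi=\phi\circ p_J\circ\hat\pi$: local Hölder continuity is inherited from the local Hölder continuity of $\phi$ and the Hölder continuity of $p_J$ and $\hat\pi$, while summability is automatic, since the alphabet $E=\{0,1\}$ is finite and the defining sum $\sum_{e\in E}\exp(\sup\widehat\phi|_{[e]})$ has only two terms. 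Theorem~\ref{EMRskews} then applies and gives the exact dimensionality of $\mu_\phi^x$ with $\lim_{r\to0}\frac{\log\mu_\phi^x(B(y,r))}{\log r}=\frac{\h_{\mu_\phi}(F)}{\chi_{\mu_\phi}(F)}$ for $\mu_\phi\circ p_1^{-1}$--a.e.\ $x$, which is the asserted $\mu_\phi\circ\pi_1^{-1}$--a.e.\ conclusion.

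The one genuinely nonroutine point is the identification in the first paragraph. Unlike a full--branch iterated function system, $T_\beta$ is only a Markov map: the inverse branch $\phi_1$ is not defined on all of $[0,1)$, and its domain must be cut out by the golden--mean constraint, which is exactly the mechanism forbidding the word $11$. The substance of the argument therefore lies in setting up the graph/partition structure so that the admissible words coincide with $E_2^+(11)$ and the GDMS coding agrees with the series coding $\pi$. Once this is done, every analytic hypothesis of the earlier theorems holds automatically, because the maps are affine similarities over a finite alphabet, so no distortion or summability estimates remain to be carried out, and the statement follows at once from Theorem~\ref{EMRskews}.
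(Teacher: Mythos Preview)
Your argument is correct and follows essentially the same route as the paper: both reduce to the abstract exact--dimensionality theorem for conformal Smale skew products over a coded base, using that the golden--mean subshift $E_2^+(11)$ gives a H\"older coding of $[0,1)$ that is injective off a countable set, and that summability is free because the alphabet is finite. The only difference is cosmetic: the paper invokes Theorem~\ref{codedskews} directly, whereas you go through its GDMS wrapper, Theorem~\ref{EMRskews}.

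One small technical point in your first paragraph deserves tightening. With a single vertex $X=[0,1]$ and the two maps $\phi_0(x)=x/\beta$, $\phi_1(x)=(x+1)/\beta$, the system is \emph{not} a GDMS in the sense of Section~\ref{S-EMR}, because $\phi_1$ does not send $[0,1]$ into itself ($2/\beta>1$). You clearly see the issue---you note that $\phi_1$ is ``not defined on all of $[0,1)$'' and that the Markov constraint ``restricts the domains''---but the fix has to be made at the level of the vertex set, not the incidence matrix alone: take $V=\{0,1\}$ with $X_0=[0,1/\beta]$, $X_1=[1/\beta,1]$ (the Markov partition), and let the edges be the inverse branches between these pieces. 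Then every $\phi_e:X_{t(e)}\to X_{i(e)}$ genuinely lands in its target, and the resulting $E_A^+$ is naturally isomorphic to $E_2^+(11)$. With that adjustment Theorem~\ref{EMRskews} applies verbatim; alternatively, bypass the GDMS packaging entirely and, like the paper, feed the coding $\pi:E_2^+(11)\to[0,1)$ directly into Theorem~\ref{codedskews}.
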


\begin{proof}
This case is simpler because the coding space  is a subshift of finite type on finitely many symbols. Moreover we see that the coding map $\pi: E_2^+(11)\to [0, 1)$ over $T_{\frac{1+\sqrt 5}{2}}$ is injective outside a countable set, and  the associated symbolic skew product 
$$
\hat F: E_2^+(11)\times Y \lra E_2^+(11) \times Y
$$ 
over $E_2^+(11)$, satisfies conditions (a)--(g) in subsection \ref{CSPSE}.
Therefore if $\phi$ is locally H\"older we have also the summability condition, since we work with finitely many symbols, thus Theorem \ref{codedskews} gives the conclusion.

\end{proof}

A similar case is for $\beta$ a \textit{pseudo-golden mean number of order} $m$ (or multinacci number), i.e $\beta>1$ is the positive root of the polynomial  $z^m-z^{m-1} - \ldots -1$. Thus 1 has a finite $\beta$-expansion, 
$$1 = \frac{1}{\beta} + \ldots + \frac{1}{\beta^m}$$
In this case the inverse limit homeomorphism $\mathcal T_\beta: Z \to Z$  is defined on the finite union
$$Z = \mathop{\bigcup}\limits_{j=0}^{m-1} \Big[T_\beta^{m-j}(1), T_\beta^{m-j-1}(1)\Big) \times \Big[0, T_\beta^j(1)\Big)$$
Then we can construct a Smale skew product as above for $\mathcal T_\beta$, and prove the exact dimensionality of equilibrium measures, and the formula for their dimension. So a similar result as Theorem \ref{sqrt} is obtained for these $\beta$'s.
The partition of $[0, 1)$ given by the intervals 
\begin{equation}\label{intS}
\lt[0, \frac 1\beta\rt), \lt[\frac 1\beta, \frac 1\beta + \frac {1}{\beta^2}\rt), \ldots, \lt[\frac{1}{\beta} + \ldots + \frac{1}{\beta^{m-1}}, 1\rt),
\end{equation}
gives a GLS map $S$, which is piecewise affine and takes any such interval onto $[0, 1)$. Then, the natural extension $\mathcal S : [0, 1)^2 \to [0, 1)^2$ of $S$ has the intervals from (\ref{intS}) on its image on the vertical coordinate (see \cite{DK}), and is given by: 

\begin{equation}\label{mathcalSm}
\mathcal S(x, y)  
= 
\begin{cases}
   \lt(\beta x, \frac y\beta\rt), & {\rm \   if} \  (x, y) \in \lt[0, \frac 1\beta\rt) \times [0, 1), \vspace{2.0mm} \\
\lt(\beta^2 x - \beta, \frac 1\beta+\frac{y}{\beta^2}\rt), & {\rm \   if} \    (x, y) \in \lt[\frac 1\beta, \frac 1\beta + \frac{1}{\beta^2}\rt) \times [0, 1),

\vspace{2.0mm}

\\

\ \ \ \ \ \ \ldots \ ,

\\

\lt(\beta^mx - \sum_{j=1}^{m-1}\beta^{j}, \sum_{j=1}^{m-1}\beta^{-j}+ \frac{y}{\beta^m}\rt), & {\rm \   if} \  (x, y) \in \lt[\sum_{j=1}^{m-1}\beta^{-j}, 1\rt) \times [0, 1).
\end{cases}
\end{equation}

\

We will now recall the general concepts of \textit{first return time} and  \textit{first return map}.
\newline
 If $(X,\cF)$ is a $\sg$--algebra, $f:X \to X$ measurable, and $\mu$ a Borel probability $f$-invariant measure on $(X, \cF)$, let $A \subset X$ be a measurable set with $\mu(A) >0$. As we work with equilibrium measures, it is enough to take $A$ with $\Int (A)\ne \emptyset$.  By Poincar\'e Recurrence Theorem it  $\mu$-a.e $x \in A$ is recurrent, i.e it returns infinitely often to $A$ under iterates of $f$. Define then,
$$
n(x):= \inf\{m \ge 1, T^m(x) \in A\}
$$
This number being finite fro $\mu$--a.e. $x\in X$, is commonly called the \textit{ first return time} of $x$ to $A$. This permits to define the \textit{induced, or first return, map} $T_A:A \lra A$, by the formula
\begin{equation}\label{induce}
T_A(x) := T^{n(x)}(x).
\end{equation}
It is well known, and easy to prove,  that if 
$
\mu_A(B):= \frac{\mu(B)}{\mu(A)}
$,
for all $B \subset A$, then the probability measure $\mu_A$ on $A$ is $T_A$--invariant.
 
 \
 
It was proved  that the induced transformation of the natural extension $\mathcal T_\beta$ onto a certain subset, is isomorphic to the natural extension $\mathcal S$ of a GLS system (see for eg \cite{DK}). Recall that natural extensions are viewed only as dynamical systems, without measures. 
\newline
When $\beta=\frac{\sqrt 5 +1}{2}$, take the partition $\mathcal I = \{[0, \frac 1\beta), [\frac 1\beta, 1)\}$ and the associated GLS($\mathcal I$)-transformation
$$
S(x) :=  
\begin{cases}
          T_\beta(x), & {\rm if } \  x \in \lt[0, \frac 1\beta\rt) \\

        T_\beta^2(x), & {\rm  if } \ x \in [\frac 1\beta, 1) 
\end{cases}
$$
So if $\beta= \frac{\sqrt 5+1}{2}$, let  
$$
W := [0, 1) \times \lt[0, \frac 1\beta\rt).
$$
Then let
$$
\mathcal T_{\beta, W}: W \lra W,
$$
be the induced transformation of $\mathcal T_\beta$ on $W$.

If $(x, y) \in [0, \frac 1\beta) \times [0, 1)$, then
$$
\mathcal T_\beta(x, y) \in [0, 1) \times \lt[0, \frac 1\beta\rt),
$$
so for such $(x, y)$, we get $n(x, y) = 1$.  If $(x, y) \in [\frac 1\beta, 1) \times [0, \frac 1\beta)$, then $\mathcal T_\beta(x, y) \in [0, \frac 1\beta) \times [\frac 1\beta, 1) \notin W$, but $\mathcal T_{\beta}^2(x, y) \in W$, so $n(x, y) = 2$. \ 
Hence, the induced map $\mathcal T_{\beta, W}$ of the natural extension $\mathcal T_\beta$ on $W$ is,
\begin{equation}\label{induced-golden}
\mathcal T_{\beta, W}(x, y) 
=\begin{cases}
\lt(\beta x, \frac y\beta\rt),
&{\rm if} \ (x, y)  \in \lt[0, \frac 1\beta\rt)\times \lt[0, \frac 1\beta\rt) \\ \\
\lt(\beta(\beta x-1), \frac{y+ 1}{\beta^2}\rt) 
= \lt(\beta^2 x - \beta, \frac{y+1}{\beta^2}\rt),
&{\rm if} \ (x, y)  \in \lt[\frac 1\beta, 1\rt) \times \lt[0, \frac 1\beta\rt).
\end{cases}
\end{equation}
Then, from (\ref{natextGLS}), the inverse limit of $S$ is the map $\mathcal S:[0, 1)^2 \lra [0, 1)^2$ given by:
\begin{equation}\label{natextgolden}
\mathcal S(x, y) 
= \begin{cases}
\lt(\beta x, \frac y\beta\rt), &{\rm if}  \ (x, y)  \in \lt[0, \frac 1\beta\rt)\times [0, 1) \\  \\
\lt(\beta^2 x - \beta, \frac 1 \beta+ y \frac{\beta-1}{\beta}\rt) 
= \lt(\beta^2x-\beta, \frac{y+\beta}{\beta^2}\rt), &{\rm if} \ (x, y)   \in \lt[\frac 1\beta, 1) \times [0, 1\rt)\\
\end{cases}.
\end{equation}
If $\Psi:[0, 1)^2 \lra W$ is given by  
$$
\Psi(x, y) := \lt(x, \frac y \beta\rt),
$$
then $\Psi$ is an isomorphism between  $([0, 1)^2, \mathcal S)$ and $\lt([0, 1) \times [0, \frac 1\beta), \mathcal T_{\beta, W}\rt)$.
So $\mathcal T_{\beta, W}$ coincides (mod $\Psi$) with the natural extension $\mathcal S$ of GLS($\mathcal I$). 

Similarly, if $\beta>1$ satisfies 
$$
1 = \frac 1\beta + \ldots + \frac{1}{\beta^m}
$$ 
for some integer $m \ge 2$, then the associated natural extension  $\mathcal S$ from (\ref{mathcalSm}) is isomorphic to the induced transformation of $\mathcal T_\beta$ onto the rectangle $[0, 1) \times [0, \frac 1\beta)$.

\

For a \textit{general number} $\beta>1$, the situation is more complicated. 
\newline
First of all, \textit{not all sequences in $E_{[\beta]}^+$ are admissible}, i.e not all sequences of digits $(d_1, d_2, \ldots)$ determine a point $x \in [0, 1)$ that has $\beta$-expansion 
$$
x = \sum\limits_{n \ge 1} \frac{d_n}{\beta^n}
$$
This is an important obstacle, since we cannot code  $T_\beta$ with subshifts of finite type.
 For general $\beta>1$, one needs a more complicated GLS with partition $\mathcal I$ with countably many subintervals $I_n, n \in \mathcal D$, and then to induce the natural extension of $T_\beta$ on an appropriate subset, in order to obtain the natural extension of the GLS($\mathcal I$)-map. We will apply next Corollary \ref{c1_2016_12_16X} for equilibrium states on the natural extension of the GLS($\mathcal I$) expansion.
The construction of the inverse limit of $T_\beta$ can be found in \cite{DK}, \cite{DKS}, and we recall it here for the sake of completeness.  
Define the following rectangles 
$$
Z_0 := [0, 1)^2, \ \  \ Z_i:=[0, T_\beta^i1) \times \lt[0, \frac{1}{\beta^i}\rt), \ i \ge 1
$$
Consider the natural extension $Z$ obtained by placing each rectangle $Z_{i+1}$ on top of $Z_i$, for all $i \ge 0$. The index $i$ indicates at what height we are in this stack.  If $1$ has a finite $\beta$-expansion of length $n$, then only $n$ such rectangles $Z_i$ are stacked. 
 Assume $1$ has an infinite $\beta$--expansion; the finite case being treated in the same ways as  for $\beta = \frac{1+\sqrt 5}{2}$. 
 
Write the $\beta$--expansions of $1, x, y$, respectively as:
$$
1= .b_1b_2\ldots; \ \ \
x = .d_1d_2\ldots; \ \ \
y = .0\ldots 0 c_{i+1}c_{i+2}\ldots
$$
with $0$ repeated $i$ times. If $(x, y) \in Z_i$, then $d_1 \le b_{i+1}.$
Define $\mathcal T_\beta: Z \to Z, \ \mathcal T_\beta(x, y) = (T_\beta(x), \tilde y(x))$, with
\begin{equation}\label{generalb}
\tilde y(x) =  
\begin{cases}
\frac{b_1}{\beta}+\ldots + \frac{b_i}{\beta^i}+\frac{d_1}{\beta^{i+1}} + \frac{y}{\beta} = .b_1\ldots b_i d_1 c_{i+1}c_{i+2}\ldots, &{\rm if} \ d_1< b_{i+1}, \\
\frac y \beta,  &{\rm if} \  d_1 = b_{i+1}.
\end{cases}
\end{equation}
If $(x, y) \in Z_i$ then  $d_1 \le b_{i+1}$, so
\begin{equation}\label{bi}
\mathcal T_\beta(x, y) \in  
\begin{cases}
                   Z_0,  &{\rm if} \  d_1< b_{i+1}, \\
                  Z_{i+1},  &{\rm if} \   d_1 = b_{i+1}.
\end{cases}
\end{equation}
For $(x, y) \in Z_0$, if $d_1 < b_1$ then $\mathcal T_\beta(x, y) \in Z_0$; and if $d_i = b_i, 1 \le i \le n-1$ and $d_n < b_n$, then $\mathcal T_\beta^i(x, y) \in Z_i,  i \le n-1$ and $\mathcal T_\beta^n(x, y) \in Z_0$. Hence the induced map of $\mathcal T_\beta$ on $Z_0 = [0, 1)^2$ is 
$$
\mathcal T_{\beta, Z_0}(x, y) 
= \left\{ \begin{array}{ll}
                     \mathcal T_\beta(x, y), \  \ \, \text{if} \ d_1 < b_1, \\
\\
                  \mathcal T_\beta^n(x, y),  \  \  \text{if} \  d_i = b_{i}, 1 \le i \le n-1, \ \text{and} \ d_n < b_n\\
                     \end{array}
           \right.
$$
Partition $Z_0$ into subsets 
$$
Z_0^k:= \Big\{(x, y) \in Z_0, \ \inf\big\{n\ge 1, \mathcal T_\beta^n(x, y) \in Z_0\big\} = k\Big\}.
$$
Then,
\begin{equation}\label{natextTbetaZ0}
\mathcal T_{\beta, Z_0}(x, y) = \left\{ \begin{array}{ll}
                     \big(T_\beta(x), \frac 1\beta(y+d_1)\big), \ (x, y) \in Z_0^1,\\
\\
                  \big(T_\beta^k(x), \frac{b_1}{\beta} + \ldots + \frac{b_{k-1}}{\beta^{k-1}}+\frac{d_k}{\beta^k} + \frac{y}{\beta^k}\big), \ (x, y) \in Z_0^k, k \ge 2 \\
                     \end{array}
           \right.
\end{equation}

\begin{equation}\label{natextTbetaZ0}
\mathcal T_{\beta, Z_0}(x, y) = 
\begin{cases}
\lt(T_\beta(x), \frac 1\beta(y+d_1)\rt), &{\rm if} \ (x, y) \in Z_0^1,
\vspace{2.0mm} \\
\lt(T_\beta^k(x), \frac{b_1}{\beta} + \ldots + \frac{b_{k-1}}{\beta^{k-1}}+\frac{d_k}{\beta^k} + \frac{y}{\beta^k}\rt), &{\rm if} \ (x, y) \in Z_0^k, k \ge 2 \\
\end{cases}
\end{equation}
For any $n \ge 0$, if $b_0:= 0$, there exist unique integers $k = k(n) \ge 0$ and $1\le i \le b_{k+1}$ so that 
$$
n = b_0 + b_1 + \ldots + b_k +(i-1)
$$
Define a partition $\mathcal I = \{I_n, \ n \ge 0\}$ of $[0, 1)$ by
\begin{equation}\label{I_n}
I_n := \lt[b_0+\frac{b_1}{\beta} + \ldots + \frac{b_k}{\beta^k}+\frac{i-1}{\beta^{k+1}}, \ b_0+\frac{b_1}{\beta}+ \ldots + \frac{b_k}{\beta^k}+\frac{i}{\beta^{k+1}}\rt)
\end{equation}
From the definition of $\mathcal T_{\beta, Z_0}$ and of $I_n$, we see that for $(x, y) \in I_n\times [0, 1)$, we have:
$$
\mathcal T_{\beta, Z_0}(x, y) = \mathcal T_\beta^{k+1}(x, y) = \lt(T_\beta^{k+1}x, \ b_0+\frac{b_1}{\beta} + \ldots \frac{b_k}{\beta^k}+\frac{i-1}{\beta^{k+1}} + \frac{y}{\beta^{k+1}}\rt)
$$
If we take the transformation $S$ of GLS($\mathcal I$) and its natural extension $\mathcal S$, then (\ref{natextGLS}) applies.
If $x \in I_n$, then $s_1(x) = \beta^{k+1}$ and,
$$
h_1(x)/s_1(x) = b_0+\frac{b_1}{\beta} + \ldots + \frac{b_k}{\beta^k}+\frac{i-1}{\beta^{k+1}}
$$
Thus by (\ref{natextGLS}), $\mathcal S$ is equal to the induced map of the natural extension of $T_\beta$ on $Z_0$:
\begin{equation}\label{eq}
\mathcal S = \mathcal T_{\beta, Z_0}
\end{equation}
We can now apply (\ref{eq}) to equilibrium states of locally H\"older continuous potentials, for the induced map of the natural extension $\mathcal T_\beta$, in order to prove the \textit{exact dimensionality} of their conditional measures on fibers.
By (\ref{eq}) and Corollary \ref{c1_2016_12_16X}, we  obtain the following result,  for the induced map of the natural extension of the $\beta$--transformation:

\begin{thm}\label{golden}
Let $\beta >1$ arbitrary and let $T_\beta: [0, 1) \lra [0, 1)$ be the $\beta$-map, given by
$
T_\beta(x) = \beta x (\text{mod} \ 1).
$
Let $\phi: [0, 1)^2 \lra \R$ be a locally H\"older continuous map with 
$$
\sum\limits_{n \ge 1}\exp\(\sup\phi|_{I_n\times [0, 1)}\) < \infty,
$$
where $I_n$, $n \ge 0$, are given by (\ref{I_n}). 
Let $\mu_{\phi}$ be the equilibrium state of $\phi$ with respect to the induced map $\mathcal T_{\beta, [0, 1)^2}$ of the natural extension $\mathcal T_\beta$ on $[0, 1)^2$. Denote by $\mathcal S$ the natural extension of the GLS($\mathcal I$) map, where $\mathcal I$ is the partition of $[0, 1)$ given by $(I_n)_{n \ge 0}$. 

\fr Then for $\mu_\phi\circ\pi_0^{-1}$--a.e $x \in [0, 1)$,  conditional measure $\mu_\phi^x$ is exact dimensional on $[0, 1)$ and 
$$
\HD(\mu_\phi^x) = \lim\limits_{r\to 0} \frac{\mu_\phi^x(B(y, r))}{\log r} = \frac{\h_{\mu_{\phi}}(\mathcal S)}{\chi_{\mu_{\phi}}(\mathcal S)}
$$
\end{thm}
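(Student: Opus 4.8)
The plan is to reduce the statement entirely to the machinery already established for natural extensions of Lüroth systems, so that no genuinely new dynamical argument is needed here. The decisive ingredient is the identification \eqref{eq}, namely $\mathcal S=\mathcal T_{\beta,Z_0}$, which asserts that the induced map of the natural extension $\mathcal T_\beta$ on $Z_0=[0,1)^2$ is, as a dynamical system, exactly the natural extension of the GLS($\mathcal I$)--map $S$. Once this identification is in force, $\phi$ is simultaneously a potential for both descriptions and $\mu_\phi$ is simultaneously their equilibrium state; it then suffices to verify that $\phi$ and the GLS($\mathcal I$)--system satisfy the hypotheses of Corollary~\ref{c1_2016_12_16X}, and to read off its conclusion.

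First I would check that GLS($\mathcal I$), with $\mathcal I=\{I_n\}_{n\ge 1}$ the partition from \eqref{I_n}, is a Lüroth system in the sense of Section~\ref{S-GLS}: its inverse branches $\phi_n$ are the affine maps carrying $[0,1)$ onto the intervals $I_n$, and since the $I_n$ tile $[0,1)$ one has $\mathrm{Leb}\(\bu_n\phi_n([0,1))\)=1$. Being an iterated function system over a single vertex with all incidence entries equal to $1$, it is finitely primitive, hence finitely irreducible (cf.\ Definition~\ref{definitionsymbolirred}); and because the branches are affine contractions of the interval with pairwise disjoint--interior images, the conformal GDMS conditions---Bounded Distortion (trivially, as $|\phi_n'|$ is constant), the Open Set Condition, and the Boundary Condition---are all immediate. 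Thus $\cS=\mathrm{GLS}(\mathcal I)$ is an admissible conformal GDMS and its natural extension $\mathcal S$ falls under Corollary~\ref{c1_2016_12_16X}.

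Next I would match the hypotheses on $\phi$ to the ``in particular'' form of Corollary~\ref{c1_2016_12_16X}, which requires only that $\phi$ be locally Hölder continuous and that $\widehat\phi=\phi\circ p_J\circ\hat\pi$ be summable. Local Hölder continuity is assumed outright. For summability, note that $p_J\circ\hat\pi$ maps the cylinder $[n]\subset E_A$ into $I_n\times[0,1)=I_n\times\ov J_\cS$, so that $\sup\(\widehat\phi|_{[n]}\)=\sup\(\phi|_{I_n\times[0,1)}\)$; hence the summability of $\widehat\phi$ on $E_A$, i.e.\ $\sum_n\exp\(\sup\widehat\phi|_{[n]}\)<\infty$, is \emph{exactly} the tail hypothesis $\sum_{n\ge1}\exp\(\sup\phi|_{I_n\times[0,1)}\)<+\infty$ imposed in the theorem. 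Because $\mathcal S$ and $\mathcal T_{\beta,Z_0}$ are one and the same system and $\phi$ is the same potential, their equilibrium states coincide, and likewise $\h_{\mu_\phi}(\mathcal S)=\h_{\mu_\phi}(\mathcal T_{\beta,Z_0})$ and $\chi_{\mu_\phi}(\mathcal S)=\chi_{\mu_\phi}(\mathcal T_{\beta,Z_0})$.

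With both verifications in hand, Corollary~\ref{c1_2016_12_16X} applies verbatim: for $\mu_\phi\circ\pi_1^{-1}$--a.e.\ $x$ the conditional measure $\mu_\phi^x$ is exact dimensional on $\ov J_\cS$ (which here is $[0,1)$ up to a countable set), with $\HD(\mu_\phi^x)=\h_{\mu_\phi}(\mathcal S)/\chi_{\mu_\phi}(\mathcal S)$, the asserted formula. I expect the genuine content to lie not in this final assembly but in the identification \eqref{eq} and in Corollary~\ref{c1_2016_12_16X} themselves; the only point demanding real care here is that for \emph{general} $\beta>1$ the partition $\mathcal I$ is \emph{countably infinite} (reflecting the failure of the $\beta$--shift to be of finite type), so the summability condition is an essential, non-vacuous hypothesis---it is precisely what licenses the use of the countable--alphabet thermodynamic formalism, and one must confirm it is inherited from the stated tail condition on $\phi$ rather than assumed for free.
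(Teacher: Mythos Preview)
Your proposal is correct and matches the paper's approach exactly: the paper states Theorem~\ref{golden} as an immediate consequence of the identification \eqref{eq} ($\mathcal S=\mathcal T_{\beta,Z_0}$) together with Corollary~\ref{c1_2016_12_16X}, without a separate written proof. Your additional care in verifying that GLS($\mathcal I$) is a genuine L\"uroth system and that the summability hypothesis on $\widehat\phi$ is precisely the stated tail condition on $\phi$ is a welcome elaboration of what the paper leaves implicit.
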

 
\

Due to the expression of the induced map on $[0, 1)^2$ of the natural extension $\mathcal T_\beta$, as being a natural extension for a GLS-map, we can say more about the \textit{Lyapunov exponent} $\chi_{\mu_\phi}(\mathcal S)$ and the dimension of conditional measures:

 \begin{cor}\label{Lyapunov-GLS} In the setting of Theorem \ref{golden} with $\beta>1$ arbitrary, write the $\beta$-expansion of 1 as $1=.b_1b_2\ldots$. For an arbitrary integer $n \ge 0$, define the integers $k = k(n)\ge 0$ and $1\le i = i(n)\le b_{k+1}$ so that $n = b_1+\ldots +b_k + i-1$.
 Then, 
 \begin{itemize} 
\item[(a)] 
with the intervals $I_n, n \ge 0$ given by (\ref{I_n}), we obtain the Lyapunov exponent as,  
$$
\chi_{\mu_\phi}(\mathcal S) = \log \beta \cdot \mathop{\sum}\limits_{n \ge 0} \big(k(n)+1\big)\cdot \mu_\phi\big(I_n \times [0, 1)\big)
$$
Hence for $\mu_\phi\circ\pi_0^{-1}$--a.e $x \in [0, 1)$, we have: \ 
$
\HD(\mu_\phi^x) = \frac{\h_{\mu_{\phi}}(\mathcal S)}{\log \beta  \cdot \mathop{\sum}\limits_{n \ge 0} (k(n)+1)\cdot \mu_\phi\big(I_n \times [0, 1)\big)}.
$
\item [(b)] When $\beta = \frac{1+\sqrt 5}{2}$, we obtain that the Lyapunov exponent of $\mu_\phi$ is equal to
$$
 \chi_{\mu_\phi}(\mathcal S) 
 = \log \frac{1+\sqrt 5}{2} \cdot \lt(1 + \mu_\phi\Big(\Big[\frac 1\beta, 1\Big) \times [0, 1)\Big)\rt).
$$
Hence, for $\mu_\phi\circ\pi_0^{-1}$--a.e $x \in [0, 1)$, we have: \ 
$
\HD(\mu_\phi^x) = \frac{\h_{\mu_{\phi}}(\mathcal S)}{\log \frac{1+\sqrt 5}{2}  \Big(1 + \mu_\phi\big([\frac 1\beta, 1) \times [0, 1)\big)\Big)}.
$

\item [c)] Let $\beta>1$ be the positive root of the polynomial $z^m - z^{m-1} - \ldots -1$, so that $1 = \frac 1\beta + \frac{1}{\beta^2} + \ldots + \frac{1}{\beta^m}$. Let $\mathcal S$ from (\ref{mathcalSm}) be the natural extension of the associated GLS.  Then, the Lyapunov exponent of $\mu_\phi$ is equal to 
$$
\chi_{\mu_\phi}(\mathcal S) = \log \beta \cdot \lt(1+ \mu_\phi\Big([\frac 1\beta, \frac 1\beta+\frac{1}{\beta^2})\Big) + \ldots + (m-1) \mu_\phi\Big([\frac{1}{\beta} + \ldots + \frac{1}{\beta^{m-1}}, 1)\Big)\rt)$$
Hence for $\mu_\phi\circ\pi_0^{-1}$--a.e $x \in [0, 1)$, 
$$
\HD(\mu_\phi^x) = \frac{\h_{\mu_{\phi}}(\mathcal S)}{\log \beta \cdot \lt(1+ \mu_\phi\Big([\frac 1\beta, \frac 1\beta+\frac{1}{\beta^2})\Big) + \ldots + (m-1) \mu_\phi\Big([\frac{1}{\beta} + \ldots + \frac{1}{\beta^{m-1}}, 1)\Big)\rt)}$$
\end{itemize}
\end{cor}
 
\begin{proof}
In order to prove (a) we apply (\ref{I_n}), and (\ref{eq}). Let us write 
$$
\mathcal S(x, y) = \(S(x), g_x(y)\), \ (x, y) \in [0, 1)\times [0, 1),
$$
where $S$ is the GLS($\mathcal I$)-transformation and $\mathcal S$ is its natural extension (see \ref{natextGLS}). The derivative of the fiber map $g_x$ is constant and equal to $L_n$, for $x \in I_n$, where $L_n$ is the length of the interval $I_n$ and thus is equal to $\frac{1}{\beta^{k(n)+1}}$, for $n \ge 0$. Finally, for the Hausdorff (and pointwise) dimension of conditional measures we apply Theorem \ref{golden}.
 
For (b) we apply (\ref{natextgolden}) to get, for $\beta = \frac{1+\sqrt 5}{2}$,
$$
g_x'(y)= \frac 1\beta, \ \ 
\text{if} \ (x, y) \in [0, \frac 1 \beta)\times [0, 1), \ \text{and}, \ 
g_x'(y) = \frac{1}{\beta^2}, \ 
\text{if} \ (x, y) \in [\frac 1\beta, 1) \times [0, 1)$$
Therefore in this case, $k(0) = 0$ and $k(1) = 1$, which we use in the formula for the Lyapunov exponent.
Then we use the fact that  
$$
\mu_\phi\lt(\Big[0, \frac 1 \beta\Big)\times [0, 1)\rt) + \mu_\phi\lt(\Big[\frac 1\beta, 1\Big) \times [0, 1\rt) = 1.
$$

For (c), since $b_1 = \ldots = b_m = 1$, it follows that $$k(1) = 1, k(2) = 2, \ldots, k(m) = m$$ Then we use the definition of the Lyapunov exponent 
$$
\chi_{\mu_\phi}(\mathcal S) = - \int_{[0, 1)^2} \log|g_x'(y)| d\mu_\phi(x,y),
$$
and the fact that  
$$
\mu_\phi\lt(\lt[0, \frac 1\beta\rt)\rt) + \mu_\phi\lt(\lt[\frac 1\beta, \frac 1\beta+\frac{1}{\beta^2}\rt)\rt) + \ldots + \mu_\phi\lt(\lt[\frac{1}{\beta} + \ldots + \frac{1}{\beta^{m-1}}, 1\rt)\rt) = 1.
$$
\end{proof}

From (\ref{eq}), we know that the induced map $\mathcal T_{\beta, [0, 1)^2}$ is equal to the inverse limit $\mathcal S$ of the GLS transformation $S$ associated to the countable partition $\mathcal I$, given by (\ref{I_n}). From (\ref{natextGLS}) and as $f$ is given by (\ref{A1_2017_03_16}), then $\mathcal S(x, y)$ and $\mathcal T_{\beta, [0, 1)^2}$ satisfy
\begin{equation}\label{ultimaeq}
\mathcal S(x, y)
 = \mathcal T_{\beta, [0, 1)^2}(x, y) = \lt(f(x), \frac{h_1}{s_1} + \frac{y}{s_1}\rt), \ (x, y) \in [0, 1)^2
\end{equation}

\

We use now the explicit form of $\mathcal T_{\beta, W}$ and Theorem~\ref{globalexact}, to show that any $\mathcal T_{\beta, W}$--equilibrium measure $\mu_\phi$ 
 is exact dimensional on $[0, 1)^2$, and to compute its dimension.
Notation is of Corollary~\ref{Lyapunov-GLS}; and $\pi_1: [0, 1)^2 \to [0, 1)$ is the projection on first coordinate.

\

\begin{thm}\label{beta-global}

Let an arbitrary $\beta >1$, $T_\beta(x) = \beta x \ ({\rm mod} \, 1)$, $x \in [0, 1)$, and let $\mathcal T_\beta$ be the natural extension of $T_\beta$, and $\mathcal S = \mathcal T_{\beta, [0, 1)^2}$ be the induced map of $\mathcal T_\beta$ on $[0, 1)^2$. Recall  the associated map $f$ from (\ref{A1_2017_03_16}), (\ref{ultimaeq}).
Let  $\phi:[0, 1)^2\lra\R$ be a locally H\"older continuous potential which satisfies,
$$
\sum\limits_{n \ge 1}\exp\(\sup\phi|_{I_n\times [0, 1)} \) < \infty,
$$
where the subintervals $I_n, n \ge 0$ are given by (\ref{I_n}). 
Denote by $\mu_\phi$ the unique equilibrium measure of $\phi$ with respect to $\mathcal T_{\beta, [0, 1)^2}$. Define
$
\nu:=\mu_\phi\circ\pi_{1}^{-1}
$
as the projection of $\mu_\phi$ on the first coordinate. 
Then, $\mu_\phi$ is exact dimensional on $[0, 1)^2$ and,
$$
\HD(\mu_\phi) = \frac{2\h_{\nu}(f)}{\log \beta \cdot \mathop{\sum}\limits_{n \ge 0} \big(k(n)+1\big)\mu_\phi\big(I_n \times [0, 1)\big)}
$$
\end{thm}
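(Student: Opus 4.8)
The plan is to exploit the identification \eqref{eq}, which realizes $\mathcal S=\mathcal T_{\beta,[0,1)^2}$ as the natural extension $\^f$ of the GLS$(\mathcal I)$ map $f$ from \eqref{A1_2017_03_16}, and then to feed the \emph{fiberwise} exact dimensionality of Corollary~\ref{c1_2016_12_16X} into the \emph{global} criterion of Theorem~\ref{globalexact}. First I would verify the hypotheses of Corollary~\ref{c1_2016_12_16X}: the potential $\phi$ is locally H\"older continuous, and since the one-symbol cylinders of the GLS coding project exactly onto the sets $I_n\times[0,1)$, the assumption $\sum_{n\ge1}\exp(\sup\phi|_{I_n\times[0,1)})<+\infty$ is precisely the summability of $\widehat\phi=\phi\circ p_J\circ\hat\pi$ on $E_A$. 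Corollary~\ref{c1_2016_12_16X} then gives, for $\nu$-a.e.\ $x$, that $\mu_\phi^x$ is exact dimensional on the fiber with the constant value
\[
\alpha:=\frac{\h_{\mu_\phi}(\mathcal S)}{\chi_{\mu_\phi}(\mathcal S)}.
\]

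Next I would establish the second hypothesis of Theorem~\ref{globalexact}, namely that $\nu=\mu_\phi\circ\pi_1^{-1}$ is exact dimensional on the base $[0,1)$. Because $\mu_\phi$ is the equilibrium (Gibbs) state of $\widehat\phi$, it is ergodic by Theorem~\ref{t1111105p68}, so its projection $\nu$ is an ergodic $f$-invariant measure. This is the one genuinely nontrivial input: since $\phi$ depends on \emph{both} coordinates, $\nu$ is \emph{not} the equilibrium state of a potential on the base, so the usual thermodynamic description is unavailable. Instead I would invoke the result of \cite{MU-Adv}, which yields exact dimensionality for \emph{all} projections of ergodic invariant measures on limit sets of countable conformal IFS (and extends to GDMS); this gives $\HD(\nu)=\h_\nu(f)/\chi_\nu(f)$, with $\chi_\nu(f)=\int_{[0,1)}\log|f'|\,d\nu$.

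With both hypotheses available, Theorem~\ref{globalexact} (applied with $d=1$) shows that $\mu_\phi$ is exact dimensional on $[0,1)^2$ and
\[
\HD(\mu_\phi)=\alpha+\HD(\nu)=\frac{\h_{\mu_\phi}(\mathcal S)}{\chi_{\mu_\phi}(\mathcal S)}+\frac{\h_\nu(f)}{\chi_\nu(f)}.
\]
It remains to identify these four quantities. As $\mathcal S=\^f$ is the natural extension of $(f,\nu)$ and $\mu_\phi$ is the corresponding lift of $\nu$, entropy is preserved, so $\h_{\mu_\phi}(\mathcal S)=\h_\nu(f)$. The decisive computation is the coincidence of the two Lyapunov exponents: on $I_n$ the fiber contraction of $\mathcal S$ has derivative $\beta^{-(k(n)+1)}$ while $f$ expands by $\beta^{\,k(n)+1}$ (see \eqref{ultimaeq} and the proof of Corollary~\ref{Lyapunov-GLS}), whence, using $\nu(I_n)=\mu_\phi(I_n\times[0,1))$,
\[
\chi_{\mu_\phi}(\mathcal S)=\chi_\nu(f)=\log\beta\cdot\sum_{n\ge1}\big(k(n)+1\big)\,\mu_\phi\big(I_n\times[0,1)\big).
\]
Substituting, the two summands become equal and
\[
\HD(\mu_\phi)=\frac{2\,\h_\nu(f)}{\log\beta\cdot\sum_{n\ge1}\big(k(n)+1\big)\,\mu_\phi\big(I_n\times[0,1)\big)}.
\]

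The main obstacle is the middle step: confirming that the base projection $\nu$ of a two-coordinate equilibrium state is exact dimensional even though it is not itself a base equilibrium state, which is what forces the appeal to the general projection theorem of \cite{MU-Adv}. Everything else is bookkeeping resting on the entropy preservation of the natural extension and on the Lyapunov-exponent matching that produces the factor $2$.
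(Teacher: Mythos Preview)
Your argument is correct and follows essentially the same route as the paper: fiberwise exact dimensionality via Corollary~\ref{c1_2016_12_16X} (the paper states this as Theorem~\ref{golden}), base exact dimensionality of $\nu$ via the projection theorem of \cite{MU-Adv}, combination through Theorem~\ref{globalexact}, and then identification of the two entropies and the two Lyapunov exponents to produce the factor $2$. The only point the paper makes explicit that you leave implicit is the verification that $\h_{\mu_\phi}(\mathcal S)<+\infty$ (hence $\h_\nu(f)<+\infty$), which is the finiteness hypothesis needed to invoke \cite{MU-Adv}; this follows from summability of $\widehat\phi$, and you should state it. For the entropy equality $\h_{\mu_\phi}(\mathcal S)=\h_\nu(f)$ the paper argues via Shannon--McMillan--Breiman using that $\mathcal S$ contracts in the second coordinate, while you use preservation of entropy under the natural extension; both are valid.
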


\begin{proof}

First we will show that $\mu_\phi$ is exact dimensional. Recall from (\ref{ultimaeq}) that for $\beta>1$ arbitrary, and for all $(x, y) \in [0, 1)^2$, $$\mathcal T_{\beta, [0, 1)^2}(x, y)  = \mathcal S(x, y) = \lt(f(x), \frac{h_1}{s_1} + \frac{y}{s_1}\rt)$$ Note that
the projection $\nu$ is $f$--invariant and ergodic on $[0, 1)$.

Now, if $\mathcal I$ denotes the countable partition $(I_n)_{n \ge 1}$ from (\ref{I_n}), then from the previous Section we obtain the conjugacy 
$$
\pi:\Sigma_{\mathcal I}^+ \lra [0, 1)
$$ 
between $(\Sigma_{\mathcal I}^+, \sigma)$ and $([0, 1), f)$. Moreover,  we also have that $\nu$ gives zero measure to points, and thus  $\nu$ is the projection of an ergodic measure $\tilde \nu$ on $\Sigma_{\mathcal I}^+$. The interval $[0, 1)$ is viewed as the limit set of the iterated system associated to the countable partition $\mathcal I$ of $[0, 1)$, where the contractions are the inverses of the branches of $f$ on $I_n, n \ge 1$.

 Consider now, in the notation of \cite{MU-Adv}, the random system given by a parameter space $\Lambda = \{\lambda\}$ with the identity homeomorphism $\theta: \Lambda \to \Lambda, \ \theta(\lambda) = \lambda,$ which preserves the Dirac delta measure $\delta_\lambda$, and the shift space $(\Sigma_{\mathcal I}^+, \sigma)$ with the ergodic $\sigma$--invariant measure $\tilde\nu$.
Then, the measure $\tilde\nu$ is in fact the only conditional measure of the product measure $\delta_\lambda \times \tilde\nu$ on $\Lambda \times \Sigma_{\mathcal I}^+$. 

On the other hand, since the potential $\phi$ is summable, it follows from our results in  \ref{con}  and from the fact that $\big([0, 1)^2, \mathcal T_{\beta, [0, 1)^2}\big)$ is coded by a Smale space of countable type, that the entropy $\h_{\mu_\phi}(\mathcal T_{\beta, [0, 1)^2})$ is finite. Therefore, since $$\nu = \mu_\phi\circ\pi_{1}^{-1},$$ we obtain that  $\h_{\tilde\nu}(f)< \infty$. But then we get from Remark~3.4 of \cite{MU-Adv} that 
$$
\H_{\delta_\lambda\times \tilde \nu}\(\pi^{-1}_{\Sigma_I^+}(\xi)|\pi_\Lambda^{-1}(\epsilon_\Lambda)\) < \infty
$$
Hence, it follows from Theorem~3.13 in \cite{MU-Adv} and the discussion above,  that $\nu$ is exact dimensional on $[0, 1)$, and that 
$$
\HD(\nu) = \frac{\h_{\nu}(f)}{\chi_\nu(f)}
$$

On the other hand, it follows from Theorem \ref{golden} that the conditional measures of $\mu_\phi$ are exact dimensional on fibers (which fibers are all equal to $[0, 1)$ in this case).

 Now, if the $f$--invariant measure $\nu$ is exact dimensional on $[0, 1)$, and if the conditional measures of $\mu_\phi$ are exact dimensional on fibers,  we apply Theorem~\ref{globalexact}  to obtain that $\mu_\phi$ is exact dimensional globally on $[0, 1)^2$. From Theorem \ref{globalexact},  $\HD(\mu_\phi)$ is the sum of $\HD(\nu)$ and of the dimension of conditional measures. 

The last step is then to obtain the expression of the Lyapunov exponent $\chi_\nu(f)$. In our case, it follows from (\ref{A1_2017_03_16}) that  $|f'(x)|=L_n^{-1}$ for all $x \in I_n$, where $L_n$ denotes the length of $I_n$, for $n \ge 0$. In addition, it follows from (\ref{I_n}) that 
$$
L_n = \frac{1}{\beta^{k(n)+1}}
$$
But also 
$
\nu(A) = \mu_\phi(A \times[0, 1)),
$
for any Borel set $A \subset [0, 1)$. Therefore, 
$$
\chi_\nu(f) = \log \beta \cdot \mathop{\sum}\limits_{n \ge 0}\big(k(n)+1\big) \mu_\phi\big(I_n \times [0, 1)\big)
$$
In addition  by the Shannon-McMillan-Breiman Theorem (\cite{Man}), $\h_{\mu_\phi}(\mathcal S)$ is equal to $\h_\nu(f)$ since $\mathcal S$ contracts in the second coordinate.  
Then, from the above discussion and Corollary~\ref{Lyapunov-GLS},  the Hausdorff dimension of $\mu_\phi$ is obtained as in the statement.

\end{proof}

\

\textbf{Acknowledgements:}  E. Mihailescu thanks Institut des Hautes \'Etudes Sci\'entifiques, Bures-sur-Yvette, France, for a research stay when part of this paper was written. The research of the second named author was funded in part by the Simons Foundation 581668.

\end{document}